\newtheorem{theorem}{\color{black}\indent Theorem}[section]
\newtheorem{lemma}{\color{black}\indent Lemma}[section]
\newtheorem{definition}{\color{black}\indent Definition}[section]
\begin{document}
\title{\LARGE\bf Existence of nontrivial solutions to a fourth-order Kirchhoff type elliptic equation with critical exponent}
\author{Qian Zhang\qquad Yuzhu Han$^{\dag}$}
 \date{}
 \maketitle

\footnotetext{\hspace{-1.9mm}$^\dag$Corresponding author.\\
Email addresses: zhangq21@mails.jlu.edu.cn (Q. Zhang), yzhan@jlu.edu.cn (Y. Han).

\thanks{
$^*$Supported by the National Key Research and Development Program of China
(grant no.2020YFA0714101).}}
\begin{center}
{\noindent\it\small School of Mathematics, Jilin University,
 Changchun 130012, P.R. China}
\end{center}

\date{}
\maketitle

{\bf Abstract}\ In this paper, a critical fourth-order Kirchhoff type elliptic
equation  with a subcritical perturbation is studied. The main feature of this
problem is that it involves both a nonlocal coefficient and a critical term,
which bring essential difficulty for the proof of the existence of weak solutions.
When the dimension of the space is smaller than or equals to $7$,
the existence of weak solution is obtained by combining the Mountain Pass Lemma
with some delicate estimate on the Talenti's functions. When the dimension of the
space is larger than or equals to $8$, the above argument no longer works.
By introducing an appropriate truncation on the nonlocal coefficient, it is shown that the problem
admits a nontrivial solution under appropriate conditions on the parameter.

{\bf Keywords}  fourth-order;  Kirchhoff;  critical;  Mountain Pass Lemma; truncation.

{\bf AMS Mathematics Subject Classification 2020:} Primary 35K35; Secondary 35K91.

\section{Introduction}
\setcounter{equation}{0}

In this paper, we consider the following critical fourth-order Kirchhoff type elliptic problem
\begin{equation}\label{eq1}
\ \ \ \ \ \ \ \ \begin{cases}
\Delta^2u-(a+b\int_{\Omega}|\nabla{u}|^2\mathrm{d}x) \Delta{u}=\lambda{f(x,u)}+|u|^{2^{**}-2}u,&x\in\Omega,\\
u=\dfrac{\partial u}{\partial\overrightarrow{n}}=0,&x\in\partial\Omega,
\end{cases}
\end{equation}
where $\Delta^2$ denotes the bi-harmonic operator, $\Omega\subset \mathbb{R}^N(N\geq5)$
is a bounded domain with smooth boundary $\partial\Omega$,
$\overrightarrow{n}$ is the unit outward normal on $\partial \Omega$, $2^{**}=\dfrac{2N}{N-4}$
is the critical Sobolev exponent, $a\geq0$, $b,\ \lambda>0$ and
$f:\overline{\Omega}\times\mathbb{R}\rightarrow \mathbb{R}$ is a continuous function that
satisfies some conditions which will be stated later.

Problem \eqref{eq1} is closely related to the stationary version of the following equation
$$u_{tt}+\Delta^2u-(a+b\int_{\Omega}|\nabla{u}|^2\mathrm{d}x)\Delta{u}=h(x,u), $$
which is regarded as a good approximation for describing nonlinear vibrations of beams or plates.
Moreover, it is also widely applied in engineering, physics and other applied sciences (see\cite{Ball,Berger}).

It is well known that the first equation in \eqref{eq1} is usually referred to as
being nonlocal due to the presence of the term $a+b\int_{\Omega}|\nabla{u}|^2\mathrm{d}x$,
which implies that the equation is no longer a pointwise identity.
On the other hand, the equation involves a critical term in the sense that
the embedding $H_0^2(\Omega)\hookrightarrow L^{2^{**}}(\Omega)$ is not compact.
These features cause some mathematical challenges which makes it difficult to
prove the existence of weak solutions. It is also because of these challenges
that such problems are attracting more and more people, especially after the
pioneering work of Br\'{e}zis and Nirenberg \cite{BreNi}.
We only briefly recall some related works on nonlocal problems.

In \cite{Figueiredo2013}, Figueiredo studied the following Kirchhoff type problem
with critical exponent
\begin{equation}\label{eq1.3}
\ \ \ \ \ \ \ \ \begin{cases}
-M(\int_{\Omega}|\nabla u|^2\mathrm{d}x)\Delta u=\lambda g(x,u)+|u|^{2^{*}-2}u, \ &x\in\Omega,\\
u(x)=0, \ &x\in\partial\Omega,
\end{cases}
\end{equation}
where $\Omega\subset \mathbb{R}^N(N\geq 3)$ is a bounded smooth domain, $\lambda$ is a positive
parameter, $M$ and $f$ are continuous functions that satisfy some specific conditions.
On the basis of the variational method, truncation argument and a priori estimate,
he obtained the existence of a positive solution to problem \eqref{eq1.3} and
studied the asymptotic behavior of this solution as $\lambda\rightarrow+\infty$.
Later, Naimen \cite{ND} investigated the existence of positive solutions to problem \eqref{eq1.3}
with $N=3$ and $M(t)=a+bt$. The existence of a positive solution is obtained for
all $\lambda>0$ by means of variational method, when $g(x,s)$ satisfies some
growth conditions as $s\rightarrow+\infty$. When $g(x,s)$ fulfills another kind of growth condition,
the existence of a positive solution is proved for suitably large $\lambda$,
with the help of the variational method and truncation argument.

There are also some works on fourth-order Kirchhoff type problems. For example,
Wang et al. \cite{Wang and An} considered the following boundary value problem
\begin{equation}\label{eq1.2}
\ \ \ \ \ \ \ \ \begin{cases}
\Delta^2u-M(\int_{\Omega}|\nabla u|^2\mathrm{d}x)\Delta u=f(x,u), \ &x\in\Omega,\\
u=\Delta u=0,&x\in\partial\Omega,
\end{cases}
\end{equation}
where $f(x,s)$ is a subcritical nonlinearity. With the help of the Mountain Pass Lemma,
they investigated the existence and multiplicity
of nontrivial solutions to problem \eqref{eq1.2} under appropriate conditions on the function
$M(t)$ and the nonlinearity $f(x,u)$. Later, problem \eqref{eq1.2} with $M(t)=\lambda(a+bt)$
was reconsidered by them and at least one nontrivial solution was shown to exist when $\lambda$
is bounded from above (see \cite{WangFanglei}).

As for the critical fourth-order Kirchhoff type problems,
Hssini et al. \cite{HE2016} studied the following problem
\begin{equation}\label{eq3}
\ \ \ \ \ \ \ \ \begin{cases}
\Delta^2u-M(\int_{\Omega}|\nabla u|^2\mathrm{d}x)\Delta u=\lambda f(x,u)+|u|^{2^*-2}u, \ &x\in\Omega,\\
u=\Delta u=0,&x\in\partial\Omega,
\end{cases}
\end{equation}
where $\Omega\subset \mathbb{R}^N(N\geq 5)$ is a bounded smooth domain
and $\lambda$ is a positive parameter. They used variational method to
show that under some conditions on $M$ and $f$, there exists
$\lambda^*>0$ such that problem \eqref{eq3} has a nontrivial solution
for all $\lambda\geq\lambda^*$. Later, Song et al. \cite{Shishaoyun}
considered the multiplicity of solutions to problem \eqref{eq3} by
using the concentration compactness principle and variational method,
for different kinds of subcritical perturbations.

By carefully checking the conditions in \cite{HE2016,Shishaoyun}, one sees that for
the special case $M(t)=a+bt$ with $a\geq0$ and $b>0$, the admissible dimension of the
space must satisfy $5\leq N\leq7$, which means that there is almost no conclusion for
problem \eqref{eq1} when $N\geq8$. Motivated mainly by \cite{HE2016,Shishaoyun} and
to make our results complete, we intend to study the existence of nontrivial solutions
to problem \eqref{eq1} for all $N\geq5$. When $5\leq N\leq7$, we can prove that problem \eqref{eq1}
admits at least one nontrivial solution for all $\lambda>0$, by using the Mountain Pass
Lemma and the concentration compactness principle. A crucial step in this process is to
show for each $\lambda>0$ that the mountain pass level is smaller than a critical value $c$,
which is done after some delicate estimate on the Talenti's function. However, the above argument
does not work when $N\geq8$. The main reason is that for this case the boundedness of the (PS)
sequence can not be obtained in general since $2<2^{**}\leq4$. In view of this, we introduce an
appropriate truncation on the nonlocal coefficient to overcome this difficulty, and
show that there exists a constant $\lambda_*>0$ such that problem \eqref{eq1} admits at least one
nontrivial solution for all $\lambda\geq\lambda_*$.  Moreover, we investigate the asymptotic behavior
of the solution of problem \eqref{eq1} when $\lambda$ converges to infinity by using a priori estimate.

Before stating the main results, let us introduce some structural conditions
imposed on the nonlinearity $f(x,s)$.

$(f1)$ It holds that $0\leq\theta F(x,s)\leq f(x,s)s$ for all $x\in\Omega$ and $s\in \mathbb{R}$,
where $4\leq\theta<2^{**}$ when $5\leq N\leq7$, $2<\theta<2^{**}\leq4$ when
$N\geq8$, and $F(x,s):=\int_0^s f(x,t)\mathrm{d}t$.

$(f2)$ $f(x,s)=o(|s|)$ as $s\rightarrow0$ and $f(x,s)=o(|s|^{2^{**}-1})$ as $s\rightarrow \infty$
uniformly for all $x\in\Omega$.

$(f3)$ For $5\leq N\leq 7$, there exists a nonempty open set $\omega\subset\Omega$ such that
$$\lim\limits_{s\rightarrow +\infty}\frac{f(x,s)}{s^{N^2-14N+52}}=+\infty$$uniformly for $x\in\omega$.

$(f4)$ For $N\geq 8$, there exist a nonempty open set $\widetilde{\omega}\subset\Omega$ and an interval $I\subset(0,\infty)$
such that $f(x,s)>0$ for $x\in\widetilde{\omega}$ and $s\in I$.\\

Our results can be summarized as follows:
\begin{theorem}\label{th1.1}
For $5\leq N\leq 7$, let $a\geq0$, $b>0$ and $\lambda>0$. Assume that $f$ satisfies $(f1)-(f3)$.
Then problem \eqref{eq1} has at least one nontrivial solution for all $\lambda>0$.
\end{theorem}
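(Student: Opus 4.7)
The plan is to find a critical point of the energy functional
\begin{equation*}
J_\lambda(u)=\tfrac{1}{2}\|u\|^{2}+\tfrac{a}{2}\|\nabla u\|_{2}^{2}+\tfrac{b}{4}\|\nabla u\|_{2}^{4}-\lambda\int_{\Omega}F(x,u)\,dx-\tfrac{1}{2^{**}}\int_{\Omega}|u|^{2^{**}}\,dx
\end{equation*}
on $H:=H_{0}^{2}(\Omega)$ with norm $\|u\|=\|\Delta u\|_{2}$ via the Mountain Pass Lemma. To check the mountain pass geometry, I would use the estimate $F(x,s)\leq \varepsilon s^{2}+C_{\varepsilon}|s|^{2^{**}}$ that follows from $(f2)$, together with the continuous embeddings $H\hookrightarrow H_{0}^{1}(\Omega)\hookrightarrow L^{2^{**}}(\Omega)$, to get $J_{\lambda}(u)\geq\alpha>0$ on some sphere $\|u\|=\rho$. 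For the ``far'' endpoint of the paths, fix any nontrivial $u_{0}\geq0$ supported in the set $\omega$ of $(f3)$; since $2^{**}>4$ the negative critical term dominates every polynomial of degree $\leq 4$ in $t$, so $J_{\lambda}(tu_{0})\to-\infty$ as $t\to\infty$, which supplies $e=t_{0}u_{0}$ with $\|e\|>\rho$ and $J_{\lambda}(e)<0$.

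Next I would prove boundedness of any Palais-Smale sequence $(u_{n})\subset H$ at level $c$. The algebraic identity for $J_{\lambda}(u_{n})-\tfrac{1}{\theta}\langle J_{\lambda}'(u_{n}),u_{n}\rangle$, combined with the first inequality in $(f1)$, yields
\begin{equation*}
c+o(\|u_{n}\|)\geq\Big(\tfrac{1}{2}-\tfrac{1}{\theta}\Big)\|u_{n}\|^{2}+\Big(\tfrac{1}{4}-\tfrac{1}{\theta}\Big)b\|\nabla u_{n}\|_{2}^{4}+\Big(\tfrac{1}{\theta}-\tfrac{1}{2^{**}}\Big)\int_{\Omega}|u_{n}|^{2^{**}}\,dx,
\end{equation*}
whose right-hand side has nonnegative coefficients precisely because $4\leq\theta<2^{**}$; the first term alone forces $\|u_{n}\|\leq C$. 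Passing to a weak limit $u_{n}\rightharpoonup u^{*}$, standard arguments identify $u^{*}$ as a critical point of $J_{\lambda}$, and a concentration-compactness analysis of $v_{n}:=u_{n}-u^{*}$, together with a Brezis-Lieb splitting of the critical term, upgrades the convergence to norm convergence (so that $(PS)_{c}$ holds) as long as $c$ lies strictly below an explicit threshold $c^{*}$ that can be read off from the biharmonic Talenti constant $S$ and from the parameter $b$.

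The heart of the proof, and in my view the main obstacle, is to show that the mountain pass level $c_{\lambda}$ satisfies $c_{\lambda}<c^{*}$ for every $\lambda>0$. I would test $J_{\lambda}$ along the path $t\mapsto tu_{\varepsilon}$, where $u_{\varepsilon}$ is a cut-off Talenti extremizer for $H_{0}^{2}(\Omega)\hookrightarrow L^{2^{**}}(\Omega)$ concentrated inside $\omega$. Writing $g_{\varepsilon}(t):=J_{\lambda}(tu_{\varepsilon})$, one must expand $\sup_{t\geq0}g_{\varepsilon}(t)$ as $\varepsilon\to 0$ using the classical expansions $\|u_{\varepsilon}\|^{2}=S^{N/4}+O(\varepsilon^{N-4})$, $\|u_{\varepsilon}\|_{2^{**}}^{2^{**}}=S^{N/4}+O(\varepsilon^{N})$ and the dimension-dependent asymptotics of $\|\nabla u_{\varepsilon}\|_{2}^{2}$, and then match these with a lower bound on $\int_{\omega}F(x,tu_{\varepsilon})\,dx$ drawn from $(f3)$. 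The precise role of the exponent $N^{2}-14N+52$ in $(f3)$ is to guarantee that the contribution of the subcritical perturbation beats the positive correction produced by the Kirchhoff quartic term $\tfrac{b}{4}\|\nabla u_{\varepsilon}\|_{2}^{4}$ in each of the three admissible dimensions $N=5,6,7$, so that $\sup_{t\geq0}g_{\varepsilon}(t)<c^{*}$ for $\varepsilon>0$ small enough. Once this Talenti-level estimate is in hand, the Mountain Pass Lemma delivers a nontrivial critical point of $J_{\lambda}$ at level $c_{\lambda}>0$, which is the desired nontrivial weak solution of \eqref{eq1}.
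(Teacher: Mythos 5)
Your roadmap matches the paper's (Lemmas 3.1--3.4): establish the mountain pass geometry, derive boundedness of any $(PS)_c$ sequence from the algebraic identity $J(u_n)-\tfrac{1}{\theta}\langle J'(u_n),u_n\rangle$ together with $(f1)$ and $4\leq\theta<2^{**}$, prove a local compactness condition below a critical threshold via concentration compactness, and close the gap with a truncated Talenti-function estimate that exploits $(f3)$. So the overall strategy is correct. However, two of your heuristics about \emph{how} the Kirchhoff term enters are off, and would lead you astray if carried out as stated.

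First, the compactness threshold here does \emph{not} depend on $b$; it is the $b$-free constant $(\tfrac{1}{2}-\tfrac{1}{2^{**}})S^{N/4}$, exactly the same as for the Kirchhoff-free biharmonic Brezis--Nirenberg problem. The reason is structural: the nonlocal coefficient involves only $\|\nabla u_n\|_2^2$, and the embedding $H_0^2(\Omega)\hookrightarrow H_0^1(\Omega)$ is \emph{compact}, so $\|\nabla u_n\|_2^2\to\|\nabla u\|_2^2$ along any bounded sequence and the Kirchhoff term never produces a Dirac mass. This is a genuine difference from the second-order Kirchhoff problems (Naimen, Figueiredo) you may be modelling your threshold on, where concentration happens at the $H_0^1$ level and $b$ does enter the threshold. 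If you posit a $b$-dependent threshold larger than $(\tfrac{1}{2}-\tfrac{1}{2^{**}})S^{N/4}$, proving $c_\lambda$ lies below it would not give you compactness.

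Second, in the Talenti expansion the Kirchhoff quartic $\tfrac{b}{4}\|\nabla\psi_\varepsilon\|_2^4$ is $O(\varepsilon^2)$ (since $\|\nabla\psi_\varepsilon\|_2^2=O(\varepsilon)$), and it is \emph{not} the dominant positive correction that $(f3)$ must defeat. When $a>0$ the term $\tfrac{a}{2}\|\nabla\psi_\varepsilon\|_2^2=O(\varepsilon)$ is larger; for $N=5$ the biharmonic-norm correction $O(\varepsilon^{N-4})=O(\varepsilon)$ also competes. The exponent $N^2-14N+52$ in $(f3)$ is calibrated so that $\varepsilon^{-1}\int_\Omega F(x,t_\varepsilon\psi_\varepsilon)\,\mathrm{d}x\to+\infty$, which beats the $O(\varepsilon)$ correction uniformly over $N=5,6,7$ (and a fortiori the smaller Kirchhoff contribution). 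Designing $(f3)$ merely to overcome the $O(\varepsilon^2)$ Kirchhoff term would leave the level estimate open whenever $a>0$.
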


\begin{theorem}\label{th1.2}
For $N\geq8$, let $a>0$, $b>0$ and $\lambda>0$. Assume that $f$ satisfies $(f1)$, $(f2)$ and $(f4)$.
Then there exists a constant $\lambda_*>0$ such that problem \eqref{eq1} has at least
one nontrivial solution for all $\lambda\geq\lambda_*$. Moreover, if $u_\lambda$ is
a nontrivial solution to problem \eqref{eq1}, then $\lim\limits_{\lambda\rightarrow\infty} \|u_\lambda\|=0$.
\end{theorem}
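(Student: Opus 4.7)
The plan is to bypass the failure of compactness for $N \geq 8$ (where $2^{**} \leq 4$ causes the Kirchhoff quartic $b\|\nabla u\|_2^4$ in the natural energy to lose control against the critical term) by truncating the nonlocal coefficient, producing a solution of the truncated problem via the Mountain Pass Lemma, and then recovering a solution of the original equation by ensuring that the truncation is inactive when $\lambda$ is large.

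Fix $t_0 > 0$ to be chosen and set $M_a(t) = a+bt$ on $[0,t_0]$ and $M_a(t)= a+bt_0$ for $t > t_0$. Let $\widehat{A}(t) = \int_0^t M_a(s)\mathrm{d}s$ and consider the truncated functional
$$I_\lambda(u) = \tfrac{1}{2}\|\Delta u\|_2^2 + \tfrac{1}{2}\widehat{A}\bigl(\|\nabla u\|_2^2\bigr) - \lambda \int_\Omega F(x,u)\,\mathrm{d}x - \tfrac{1}{2^{**}}\int_\Omega |u|^{2^{**}}\mathrm{d}x$$
on $H_0^2(\Omega)$. Conditions $(f1)$ and $(f2)$ give the usual local positivity of $I_\lambda$ on a small sphere; using $(f4)$, pick $\psi \in C_c^\infty(\widetilde{\omega})$ with $\psi \geq 0$ taking values in $I$ on an open subset of $\widetilde\omega$, so that $\int_\Omega F(x,\psi)\,\mathrm{d}x > 0$. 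Since $F \geq 0$, the map $\lambda \mapsto I_\lambda(u)$ is nonincreasing, so $I_\lambda(T\psi) < 0$ for some fixed $T > 0$ once $\lambda$ is large, providing the descent direction needed in the mountain-pass geometry.

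The crucial gain of the truncation is a coercivity identity: a direct computation shows that for any $\theta \in (2, 2^{**})$,
$$\tfrac{1}{2}\widehat{A}(s) - \tfrac{1}{\theta} M_a(s)\,s \;\geq\; 0 \qquad \text{for all } s \geq 0,$$
provided $a>0$ and $t_0 \leq \frac{2a(\theta-2)}{b(4-\theta)}$, which is where the assumption $a>0$ is essentially used. Combined with $(f1)$, the relation $\theta < 2^{**}$, and the standard manipulation of $I_\lambda(u_n)-\tfrac{1}{\theta}\langle I_\lambda'(u_n),u_n\rangle$, this yields the bound
$$I_\lambda(u) - \tfrac{1}{\theta}\langle I_\lambda'(u), u\rangle \;\geq\; \bigl(\tfrac{1}{2}-\tfrac{1}{\theta}\bigr)\|\Delta u\|_2^2 + \bigl(\tfrac{1}{\theta}-\tfrac{1}{2^{**}}\bigr)\|u\|_{2^{**}}^{2^{**}},$$
so $(PS)_c$ sequences for $I_\lambda$ are bounded, and a concentration--compactness argument provides local compactness below a threshold $c^*$ determined by the best Sobolev constant of $H_0^2(\Omega)\hookrightarrow L^{2^{**}}(\Omega)$. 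With $\psi$ as the terminal point of the path, I would then show that the mountain-pass level satisfies $c_\lambda \to 0$ as $\lambda \to \infty$: the maximiser $t_\lambda$ of $I_\lambda(t\psi)$ cannot stay bounded away from zero since otherwise $\lambda\int_\Omega F(x,t_\lambda\psi)\,\mathrm{d}x \to \infty$ would force $I_\lambda(t_\lambda\psi) \to -\infty$, contradicting $c_\lambda > 0$; hence $t_\lambda \to 0$ and so $c_\lambda \leq I_\lambda(t_\lambda\psi) \to 0$. For $\lambda \geq \lambda_*$ the inequality $c_\lambda < c^*$ is then in force and the Mountain Pass Lemma delivers a nontrivial critical point $u_\lambda$ of $I_\lambda$.

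To conclude, the same coercivity identity applied to $u_\lambda$ gives $\bigl(\tfrac{1}{2}-\tfrac{1}{\theta}\bigr)\|\Delta u_\lambda\|_2^2 \leq c_\lambda \to 0$, which proves the asymptotic statement $\|u_\lambda\| \to 0$. In particular $\|\nabla u_\lambda\|_2^2 \leq t_0$ for $\lambda$ large enough, so $M_a(\|\nabla u_\lambda\|_2^2) = a+b\|\nabla u_\lambda\|_2^2$ and $u_\lambda$ is in fact a nontrivial weak solution of the original problem \eqref{eq1}. The hardest step will be the coupled matching of the truncation parameter $t_0$ (small enough so that the coercivity identity has no additive constant and the strictness $a>0$ is genuinely exploited) with the threshold $\lambda_*$ (large enough to guarantee both $c_\lambda < c^*$ and $\|\nabla u_\lambda\|_2^2 \leq t_0$), since the whole strategy collapses if the solution of the truncated problem ever leaves the region in which $M_a$ agrees with $a+bt$.
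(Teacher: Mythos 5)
Your proposal is correct and follows essentially the same route as the paper: truncate the Kirchhoff coefficient at a level $t_0$ small enough that the $\widehat{A}(s)-\frac{2}{\theta}M_a(s)s\ge 0$ coercivity holds (the paper phrases this as choosing $c_0=a+bt_0$ with $a<c_0<\frac{\theta}{2}a$ and using the crude bounds $as\le\widehat{\psi_0}(s)\le c_0 s$, $a\le\psi_0\le c_0$, which is a looser but equivalent-in-spirit condition to your sharper $t_0\le\frac{2a(\theta-2)}{b(4-\theta)}$), run the Mountain Pass Lemma with concentration--compactness for the truncated functional, show $c_\lambda\to 0$ by tracking the maximiser $t_\lambda\to 0$ on a fixed fibre, and then deduce via the same quotient $I_\lambda(u_\lambda)-\frac{1}{\theta}\langle I_\lambda'(u_\lambda),u_\lambda\rangle$ both that the truncation is inactive for $\lambda$ large and that $\|u_\lambda\|\to 0$. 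The only cosmetic difference is that you obtain the negative endpoint of the mountain-pass path from the $\lambda F$ term via $(f4)$, while the paper gets it for free for every $\lambda$ from the critical term $\frac{t^{2^{**}}}{2^{**}}\|u\|_{2^{**}}^{2^{**}}$ since $2^{**}>2$.
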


This paper is organized as follows. In Section 2, we introduce some notations, definitions and necessary lemmas.
The existence of a nontrivial solution to problem \eqref{eq1} for $5\leq N\leq 7$ is proved in Section $3$.
In Section 4, we prove the existence and the asymptotic behavior of a nontrivial solution to problem \eqref{eq1} with $N\geq8$.

\par
\section{Preliminaries}
\setcounter{equation}{0}

In this section, we begin with some notations and definitions that will be used throughout the paper.
We denote by $\|\cdot\|_p$ the usual $L^p(\Omega)$-norm for $1\leq p\leq\infty$ and use $\rightarrow$
and $\rightharpoonup$ to denote the strong and weak convergence in each Banach space, respectively.
In addition, we use $C,C_1,C_2,\cdots$ to denote (possibly different) generic positive constants,
and use $C(\delta)$ to mean a positive constant depending on $\delta$. We always denote by $B(x_0,R)$
the ball with radius $R$ centered at $x_0$ in $\mathbb{R}^N$, and use $\omega_N$ to denote the area
of the unit sphere in $\mathbb{R}^N$. Sometimes $B(0,R)$ will be simply written as $B_R$ if no
confusion arises. For each $t>0$, $O(t)$ denotes the quantity satisfying
$|\frac{O(t)}{t}|\leq C$, and $o(t)$ means that $|\frac{o(t)}{t}|\rightarrow 0$ as $t\rightarrow 0$.
Finally, the Hilbert space $H_0^2(\Omega)$ will be equipped with
the inner product
$$(u,v)=\int_{\Omega}(\Delta u\Delta v +a\nabla u\nabla v)\mathrm{d}x,$$
and the norm
$$\|u\|^2=\int_{\Omega}(|\Delta{u}|^2+a|\nabla u|^2)\mathrm{d}x,$$
and its dual space is denoted by $H^{-2}(\Omega)$.
Since $a\geq 0$, it is easy to see that the norm $\|\cdot\|$ is equivalent to the full one due to Poincar\'{e}'s inequality.

In this paper, we consider weak solutions to problem \eqref{eq1} in the following sense.
\begin{definition}\label{weaksolution}$\mathrm{\bf{(Weak \ solution)}}$
A function $u\in H_0^2(\Omega)$ is called a weak solution to problem \eqref{eq1}, if for all $v\in H_0^2(\Omega)$, it holds that
$$\int_{\Omega}\Delta u\Delta v\mathrm{d}x+(a+b\int_{\Omega}|\nabla u|^2\mathrm{d}x)\int_{\Omega}\nabla u\nabla v\mathrm{d}x-\lambda\int_{\Omega}f(x,u)v\mathrm{d}x-\int_{\Omega}|u|^{2^{**}-2}uv\mathrm{d}x=0.$$
\end{definition}

The energy functional associated with problem \eqref{eq1} is
$$J(u)=\frac{1}{2}\|u\|^2+\frac{b}{4}(\int_{\Omega}|\nabla u|^2\mathrm{d}x)^2-\lambda\int_{\Omega}F(x,u)\mathrm{d}x-\frac{1}{2^{**}}\int_{\Omega}|u|^{2^{**}}\mathrm{d}x, ~~~~~~\forall\ u\in H_0^2(\Omega).$$
Obviously, $J(u)$ is a $C^1$ functional in $H_0^2(\Omega)$(see \cite{Willem}).

We use $J':H_0^2(\Omega)\rightarrow H^{-2}(\Omega)$ to denote the Frech\'{e}t derivative operator of $J$,
which satiafies
\begin{equation*}
\begin{split}
\langle J'(u),v\rangle=&\int_{\Omega}\Delta u\Delta v\mathrm{d}x+(a+b\int_{\Omega}|\nabla u|^2\mathrm{d}x)\int_{\Omega}\nabla u\nabla v\mathrm{d}x-\lambda\int_{\Omega}f(x,u)v\mathrm{d}x\\
&-\int_{\Omega}|u|^{2^{**}-2}uv\mathrm{d}x, \ \ \ \ \ \ \ \ \ \ \ \ \ \ \ \forall\ u,v\in H_0^2(\Omega).
\end{split}
\end{equation*}

It is well known that every critical point of $J$ is a weak solution to problem \eqref{eq1}.
Hence in order to find a nontrivial solution to \eqref{eq1} we need only to find a nontrivial
critical point of $J$. This is done by combining the following compactness condition,
which is usually referred to as the $(PS)_c$ condition or the local $(PS)$ condition, with
the Mountain Pass Lemma.

\begin{definition}\label{PS}($(PS)_c$ condition)
Assume that $X$ is a real Banach space, $J: X\rightarrow \mathbb{R}$ is a $C^1$
functional and $c\in \mathbb{R}$. We say that $\{u_n\} \subset H_0^2(\Omega)$ is
a $(PS)_c$ sequence of $J$ if
$$J(u_n)\rightarrow c \ and \ J'(u_n)\rightarrow0 \ in \ X^{-1}(\Omega) \ as \ n\rightarrow \infty,$$
where $X^{-1}$ is the dual space of $X$. Furthermore, We say that $J$ satisfies the $(PS)_c$
condition if any $(PS)_c$ sequence has a convergent subsequence.
\end{definition}

\begin{lemma}\label{Mountain Pass Lemma}(Mountain Pass Lemma \cite{Willem})
Assume that $(X,\|\cdot\|_X)$ is a real Banach space, $J:X\rightarrow \mathbb{R}$ is a $C^1$
functional and there exist $\alpha>0$ and $r>0$ such that $J$ satisfies the following mountain pass geometry:\\
(i)~$J(u)\geq\alpha>0$ if $\|u\|_X=r$;\\
(ii)~there exists a $v_0\in X$ such that $\|v_0\|_X>r$ and $J(v_0)\leq0$.\\
Then there exists a $(PS)_{c_0}$ sequence $\{u_n\}\subset X$, $i.e.$, $J(u_n)\rightarrow c_0$ and $J'(u_n)\rightarrow 0$ in $X^{-1}$ as $n\rightarrow\infty$, where $X^{-1}$ is the dual space of $X$ and
$$0<\alpha\leq c_0:=\inf_{\gamma\in \Gamma}\max_{t\in[0,1]}J(\gamma(t)),\Gamma=\{\gamma\in C([0,1],H_0^2(\Omega)):\gamma(0)=0,\gamma(1)=v_0\},$$
which is called the mountain pass level. Furthermore, $c_0$ is a critical value of $J$ if $J$ satisfies the
$(PS)_{c_0}$ condition.
\end{lemma}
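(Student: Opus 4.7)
The plan is to establish the lower bound $c_0\geq\alpha>0$ first, and then produce a $(PS)_{c_0}$ sequence by a deformation argument, with the final assertion following immediately from the $(PS)_{c_0}$ hypothesis.

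First I would verify the inequality $c_0\geq\alpha$. Given any $\gamma\in\Gamma$, the function $t\mapsto\|\gamma(t)\|_X$ is continuous with $\|\gamma(0)\|_X=0$ and $\|\gamma(1)\|_X=\|v_0\|_X>r$, so by the intermediate value theorem there is $t_\gamma\in(0,1)$ with $\|\gamma(t_\gamma)\|_X=r$. Hypothesis $(i)$ then gives $J(\gamma(t_\gamma))\geq\alpha$, whence $\max_{t\in[0,1]}J(\gamma(t))\geq\alpha$. Taking the infimum over $\gamma$ yields $c_0\geq\alpha>0$.

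The heart of the proof is the construction of a Palais--Smale sequence at level $c_0$. I would argue by contradiction: suppose no $(PS)_{c_0}$ sequence exists, so that there are $\varepsilon_0,\delta_0>0$ (with $\varepsilon_0<c_0/2$, which is possible since $c_0>0$) such that $\|J'(u)\|_{X^{-1}}\geq\delta_0$ for every $u$ with $|J(u)-c_0|\leq 2\varepsilon_0$. The classical tool to exploit this is the quantitative deformation lemma: using a locally Lipschitz pseudo-gradient vector field for $J$ on the set of regular points, multiplied by a cut-off supported in a neighborhood of $\{|J-c_0|\leq 2\varepsilon_0\}$, one integrates the resulting ODE to produce a continuous deformation $\eta:[0,1]\times X\to X$ with $\eta(1,\cdot)$ the identity outside $\{|J-c_0|\leq 2\varepsilon_0\}$ and $\eta(1,\{J\leq c_0+\varepsilon_0\})\subset\{J\leq c_0-\varepsilon_0\}$. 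The condition $\varepsilon_0<c_0/2$ together with $J(0)=0$ and $J(v_0)\leq 0$ ensures that both endpoints lie outside the active zone, so $t\mapsto\eta(1,\gamma(t))$ still belongs to $\Gamma$. Finally, picking $\gamma\in\Gamma$ with $\max_{t}J(\gamma(t))\leq c_0+\varepsilon_0$ (possible by the definition of $c_0$ as an infimum), the deformed path lies below $c_0-\varepsilon_0$ uniformly in $t$, contradicting the definition of $c_0$.

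Once the existence of a $(PS)_{c_0}$ sequence $\{u_n\}$ is secured, the last assertion is routine: assuming the $(PS)_{c_0}$ condition, extract a subsequence $u_{n_k}\to u^*$ in $X$; continuity of $J$ and $J'$ gives $J(u^*)=c_0$ and $J'(u^*)=0$, so $c_0$ is a critical value. The main obstacle is purely technical and lies in the second paragraph: constructing the pseudo-gradient vector field on a general Banach space (where no true gradient exists) and carefully tuning the cut-off so that (a) the flow preserves the boundary values $\gamma(0)=0$ and $\gamma(1)=v_0$, and (b) the energy strictly decreases by $2\varepsilon_0$ along orbits that traverse the slab $\{|J-c_0|\leq\varepsilon_0\}$. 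Both are handled by the standard pseudo-gradient construction in Willem's monograph, so the argument reduces entirely to invoking that machinery correctly.
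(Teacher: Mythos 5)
The paper does not prove this lemma; it is quoted verbatim (with the reference \cite{Willem}) as a known tool, so there is no ``paper's own proof'' to compare against. Your sketch is essentially the proof one finds in Willem's monograph: the lower bound $c_0\geq\alpha$ from the intermediate value theorem applied to $t\mapsto\|\gamma(t)\|_X$, the contradiction argument via the quantitative deformation lemma to produce a $(PS)_{c_0}$ sequence without assuming compactness, and the routine passage to a critical point once $(PS)_{c_0}$ holds. The one point worth tightening is the endpoint preservation: you need $J(0)=0$ and $J(v_0)\leq 0$ together with $c_0\geq\alpha>0$ and $\varepsilon_0<\alpha/2$ (not merely $\varepsilon_0<c_0/2$, though $c_0\geq\alpha$ makes these compatible) to guarantee $0$ and $v_0$ lie strictly below $c_0-2\varepsilon_0$, hence outside the region where the flow acts, so that $\eta(1,\gamma(\cdot))$ remains in $\Gamma$; and the hypotheses as stated do not explicitly give $J(0)=0$, so strictly speaking one should note that $J(0)<\alpha\leq c_0$ (which follows since $\|0\|_X=0<r$ and $J$ cannot exceed $c_0$ at the endpoints of any admissible path) suffices. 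These are standard refinements handled in the cited reference, and your reduction to the pseudo-gradient/deformation machinery is the right call.
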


To overcome the difficulties caused by the critical exponent, we recall the famous concentration compactness principle due to P. L. Lions (see \cite{Lions1} and \cite{Lions2}).
\begin{lemma}\label{jizhongjin}
Let ${u_n}\subset H_0^2(\Omega)$ be a bounded sequence with weak limit $u$, $\mu$ and $\nu$ be two nonnegative and bounded measures on $\overline{\Omega}$, such that\\
$(i)$~$|\Delta u_n|^2$ converges in the weak$^{*}$ sense of measures to a measure $\mu$,\\
$(ii)$~$|u_n|^{2^{**}}$ converges in the weak$^{*}$ sense of measures to a measure $\nu$.\\
Then for some at most countable index set $I$, points $\{x_i\}_{i\in I}\subset\overline{\Omega}$, values
$\{\mu_i\}_{i\in I}$, $\{\nu_i\}_{i\in I}\subset \mathbb{R}^{+}$, we have
\begin{equation*}
\begin{split}
(1)\ &\mu\geq |\Delta u|^2+\sum_{i\in I}\mu_i \delta_{x_i}, \ \ \mu_i>0,\\
(2)\ &\nu=|u|^{2^{**}}+\sum_{i\in I}\nu_i \delta_{x_i}, \ \ \nu_i>0,\\
(3)\ &S\nu_i^{\frac{2}{2^{**}}}\leq\mu_i,(i\in I),
\end{split}
\end{equation*}
where $\delta_{x_i}$ is the Dirac measure mass at $x_i \in \overline{\Omega}$, and $S$ is the best Sobolev constant, i.e. $S=inf\{\int_{\mathbb{R}^N}|\Delta u|^2\mathrm{d}x:\int_{\mathbb{R}^N}|u|^{2^{**}}\mathrm{d}x=1\}$.
\end{lemma}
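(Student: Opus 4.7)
The plan is to follow the classical scheme of P.~L.~Lions, adapted from the $H^1_0$ setting to the biharmonic $H^2_0$ setting. The entire argument reduces to establishing one fundamental inequality between the limit measures, after which the structural conclusions follow from a purely measure-theoretic reverse Hölder lemma.

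First, I would reduce to the case $u=0$. Setting $w_n=u_n-u$, we have $w_n\rightharpoonup 0$ in $H^2_0(\Omega)$, and by the Brezis--Lieb lemma together with weak lower semicontinuity one gets $|\Delta w_n|^2\rightharpoonup^{*}\widetilde{\mu}:=\mu-|\Delta u|^2\geq 0$ and $|w_n|^{2^{**}}\rightharpoonup^{*}\widetilde{\nu}:=\nu-|u|^{2^{**}}\geq 0$. So it suffices to prove the decomposition for $w_n$; claim (2) for $\nu$ then follows with the atoms of $\widetilde{\nu}$, and claim (1) becomes $\widetilde{\mu}\geq\sum\mu_i\delta_{x_i}$, which is exactly the ``$\geq$'' statement once the atoms are identified.

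The central step is the \emph{reverse Sobolev inequality between measures}: for every $\varphi\in C^\infty_c(\mathbb{R}^N)$,
\begin{equation*}
\left(\int|\varphi|^{2^{**}}\,\mathrm{d}\widetilde{\nu}\right)^{2/2^{**}}\leq S^{-1}\int|\varphi|^2\,\mathrm{d}\widetilde{\mu}.
\end{equation*}
To prove this, apply the Sobolev embedding $H^2_0\hookrightarrow L^{2^{**}}$ to the test function $\varphi w_n$:
\begin{equation*}
S\|\varphi w_n\|_{2^{**}}^2\leq\|\Delta(\varphi w_n)\|_2^2=\int|\varphi\Delta w_n+2\nabla\varphi\cdot\nabla w_n+w_n\Delta\varphi|^2\,\mathrm{d}x.
\end{equation*}
Expanding the square, the leading term $\int\varphi^2|\Delta w_n|^2\,\mathrm{d}x$ converges to $\int\varphi^2\,\mathrm{d}\widetilde{\mu}$, while all remaining cross terms vanish because the compact embeddings $H^2_0(\Omega)\hookrightarrow H^1_0(\Omega)\hookrightarrow L^2(\Omega)$ force $w_n\to 0$ strongly in $L^2$ and in $H^1$ (on the support of $\varphi$). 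The left-hand side converges to $(\int|\varphi|^{2^{**}}\,\mathrm{d}\widetilde{\nu})^{2/2^{**}}$ by the weak$^*$ convergence of $|w_n|^{2^{**}}$.

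Finally, I would invoke the standard measure-theoretic lemma (Lemma I.2 in Lions \cite{Lions1}): whenever two bounded nonnegative measures $\widetilde{\mu},\widetilde{\nu}$ on $\overline{\Omega}$ satisfy the above reverse Hölder-type inequality, the measure $\widetilde{\nu}$ is purely atomic, supported on an at most countable set $\{x_i\}_{i\in I}\subset\overline{\Omega}$, and the atoms satisfy $S\nu_i^{2/2^{**}}\leq\mu_i$. This yields all three conclusions. The main obstacle — and the only place the biharmonic framework requires extra care compared to Lions's original second concentration-compactness lemma — is controlling the cross terms $\nabla\varphi\cdot\nabla w_n$ and $w_n\Delta\varphi$; this requires the \emph{double} compact embedding $H^2_0\hookrightarrow H^1_0\hookrightarrow L^2$, rather than the single Rellich compactness used in the $H^1_0$ setting. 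Everything else is a verbatim transcription of the classical argument.
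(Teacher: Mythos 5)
The paper does not give its own proof of this lemma: it simply recalls it, with citations to Lions's two 1985 papers, as a known fact whose adaptation to the biharmonic setting is standard. Your sketch is a correct reconstruction of the classical argument, and I see no gap in it. The reduction to $w_n=u_n-u\rightharpoonup 0$ is justified (weak $L^2$-convergence of $\Delta u_n$ handles the $|\Delta w_n|^2$ limit directly, and Brezis--Lieb handles $|w_n|^{2^{**}}$); the expansion of $\|\Delta(\varphi w_n)\|_2^2$ correctly isolates the leading term $\int\varphi^2|\Delta w_n|^2\,\mathrm{d}x$, with all cross and lower-order terms going to zero by the compactness of $H_0^2(\Omega)\hookrightarrow H_0^1(\Omega)\hookrightarrow L^2(\Omega)$; and the passage from the resulting reverse H\"older inequality between $\widetilde{\mu}$ and $\widetilde{\nu}$ to the atomic decomposition is exactly Lions's Lemma~I.2. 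Since the paper offers no proof to compare against, there is nothing further to reconcile.
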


\par
\section{The case $5\leq N\leq 7$}
\setcounter{equation}{0}

In this section, we consider problem \eqref{eq1} with $5\leq N\leq 7$ and prove Theorem \ref{th1.1}.
Throughout this section, we assume that $a\geq0, b>0$ and $\lambda>0$.
We first show that the energy functional $J(u)$ satisfies the mountain pass geometry.

\begin{lemma}\label{shanlujiegou}
Suppose that $f$ satisfies $(f1)$ and $(f2)$, then we have

$\mathrm{(i)}$ \ there exist $\alpha,r>0$ such that $J(u)\geq\alpha$ for all $u\in H_0^2(\Omega)$ with $\|u\|=r$,

$\mathrm{(ii)}$ there exists a function $v_0\in H_0^2(\Omega)$ such that $\|v_0\|>r$ and $J(v_0)\leq0$.
\end{lemma}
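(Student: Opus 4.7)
The plan is to verify the two geometric conditions of the Mountain Pass Lemma directly from the growth hypotheses on $f$, exploiting the fact that in the regime $5\leq N\leq 7$ we have $2^{**}=\frac{2N}{N-4}>4$, so the critical term dominates both the quadratic and the Kirchhoff quartic contributions at infinity. For (i) I will use the asymptotic bounds from $(f2)$ to control $F$ by a superposition of $|s|^2$ and $|s|^{2^{**}}$, and then combine the continuous Sobolev embeddings $H_0^2(\Omega)\hookrightarrow L^2(\Omega)$ and $H_0^2(\Omega)\hookrightarrow L^{2^{**}}(\Omega)$ to get a lower bound of the form $\tfrac14\|u\|^2-C\|u\|^{2^{**}}$, which is strictly positive on a small sphere. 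For (ii) I will evaluate $J$ along a ray $t\phi$ for a fixed nontrivial $\phi\in H_0^2(\Omega)$, use $F\geq 0$ from $(f1)$ to drop the perturbation term, and observe that the critical term beats the quartic Kirchhoff term because $2^{**}>4$.

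More concretely, for part (i), I would first fix $\varepsilon>0$ and invoke $(f2)$ to obtain $C(\varepsilon)>0$ such that
\[
|F(x,s)|\leq \varepsilon|s|^2+C(\varepsilon)|s|^{2^{**}},\qquad \forall\, x\in\Omega,\ s\in\mathbb{R}.
\]
By Sobolev embedding there exists $C_0>0$ independent of $u$ with $\|u\|_2^2\leq C_0\|u\|^2$ and $\|u\|_{2^{**}}^{2^{**}}\leq C_0\|u\|^{2^{**}}$. Dropping the nonnegative Kirchhoff term $\tfrac{b}{4}(\int_\Omega|\nabla u|^2)^2$ gives
\[
J(u)\geq \left(\tfrac{1}{2}-\lambda C_0\varepsilon\right)\|u\|^2-\left(\lambda C_0 C(\varepsilon)+\tfrac{C_0}{2^{**}}\right)\|u\|^{2^{**}}.
\]
Choosing $\varepsilon=\frac{1}{4\lambda C_0}$ yields $J(u)\geq \tfrac{1}{4}\|u\|^2-C_1\|u\|^{2^{**}}=\|u\|^2(\tfrac14-C_1\|u\|^{2^{**}-2})$, and since $2^{**}>2$, taking $r>0$ small enough so that $C_1 r^{2^{**}-2}<\tfrac{1}{8}$ produces the required positive lower bound $\alpha:=\tfrac18 r^2$ on the sphere $\|u\|=r$.

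For part (ii), I would pick any $\phi\in H_0^2(\Omega)\setminus\{0\}$ and study $J(t\phi)$ for $t>0$. Since $(f1)$ gives $F\geq 0$, one has
\[
J(t\phi)\leq \tfrac{t^2}{2}\|\phi\|^2+\tfrac{bt^4}{4}\Big(\int_\Omega|\nabla\phi|^2\,\mathrm{d}x\Big)^2-\tfrac{t^{2^{**}}}{2^{**}}\int_\Omega|\phi|^{2^{**}}\,\mathrm{d}x.
\]
For $5\leq N\leq 7$ we have $2^{**}\in\{10,6,\tfrac{14}{3}\}$, in particular $2^{**}>4$, so the right-hand side tends to $-\infty$ as $t\to\infty$. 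Therefore some $t_0>0$ large enough satisfies $\|t_0\phi\|>r$ and $J(t_0\phi)\leq 0$, and one sets $v_0=t_0\phi$.

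I do not expect a real obstacle here: the lemma is the standard mountain pass geometry verification, and both parts follow from elementary inequalities. The only subtle point worth being explicit about is the role of the dimension restriction: it is precisely the inequality $2^{**}>4$ that allows the critical term to absorb the nonlocal quartic term in (ii), and it is the failure of this inequality when $N\geq 8$ that forces the truncation approach developed later in Section 4.
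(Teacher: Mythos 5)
Your proof is correct and follows essentially the same route as the paper: both establish the small-sphere lower bound by combining the growth estimate on $F$ (from $(f1)$–$(f2)$) with the Sobolev embeddings, then fix $\delta$ small and use $2^{**}>2$ to choose $r$, and both prove (ii) by dropping the nonnegative $F$-term along a ray and invoking $2^{**}>4$ so the critical term dominates the Kirchhoff quartic. The only cosmetic difference is that you bound $F$ directly from $(f2)$ whereas the paper first states the bound for $f(x,s)s$ and then passes to $F$ via $(f1)$; the content is the same.
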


\begin{proof}
 $\mathrm{(i)}$ It follows from $(f1)$ and $(f2)$ that for each $\delta>0$ there exists a constant
 $C(\delta)>0$ such that
\begin{equation}\label{fu}
f(x,s)s\leq \delta s^2+C(\delta) |s|^{2^{**}}, \ \ (x,s)\in \Omega\times \mathbb{R}.
\end{equation}
Take $u\in H_0^2(\Omega)$ with $\|u\|=r$. By using $(f1)$, \eqref{fu} and Sobolev embedding inequality, we get
\begin{equation*}
\begin{split}
J(u)&=\frac{1}{2}\|u\|^2+\frac{b}{4}\|\nabla u\|_2^4-\lambda\int_{\Omega}F(x,u)\mathrm{d}x-\frac{1}{2^{**}}\int_{\Omega}|u|^{2^{**}}\mathrm{d}x\\
&\geq\frac{1}{2}\|u\|^2-\lambda\delta\int_{\Omega}u^2\mathrm{d}x-\lambda C(\delta)\int_{\Omega}|u|^{2^{**}}\mathrm{d}x-\frac{1}{2^{**}}\int_{\Omega}|u|^{2^{**}}\mathrm{d}x\\
&\geq\frac{1}{2}\|u\|^2-\lambda\delta C_1\|u\|^2-(\lambda C(\delta)+\frac{1}{2^{**}})C_2\|u\|^{2^{**}}\\
&=(\frac{1}{2}-\lambda\delta C_1){r}^2-(\lambda C(\delta)+\frac{1}{2^{**}})C_2{r}^{2^{**}},
\end{split}
\end{equation*}
where $C_1, C_2 >0$ are the Sobolev embedding constants. Fix $\delta<\frac{1}{2\lambda C_1}$.
Since $2^{**}>2$, we conclude that there exist $\alpha,r>0$ such that $J(u)\geq\alpha$ for all $u\in H_0^2(\Omega)$ with $\|u\|=r$.

$\mathrm{(ii)}$ Fix $u\in H_0^2(\Omega)\backslash \{0\}$. According to $(f1)$, we have
\begin{equation*}
\begin{split}
J(tu)=&\frac{t^2}{2}\|u\|^2+\frac{bt^4}{4}\|\nabla u\|_2^4-\lambda\int_{\Omega}F(x,tu)\mathrm{d}x-\frac{t^
{2^{**}}}{2^{**}}\int_{\Omega}|u|^{2^{**}}\mathrm{d}x\\
\leq&\frac{t^2}{2}\|u\|^2+\frac{bt^4}{4}\|u\|^4-\frac{t^{2^{**}}}{2^{**}}\int_{\Omega}|u|^{2^{**}}\mathrm{d}x.
\end{split}
\end{equation*}
Since $2^{**}>4$ when $5\leq N\leq 7$, we can find a $t_u>0$ large enough such that $J(t_u u)\leq0$ and $\|t_u u\|>r$.
Set $v_0:=t_u u$, and this completes the proof.
\end{proof}

Next we will prove that $J$ satisfies the $(PS)_c$ condition when c is smaller than some critical value.
\begin{lemma}\label{jubuPStiaojian}
Let $f$ satisfy $(f1)$ and $(f2)$, and assume that $c<(\frac{1}{2}-\frac{1}{2^{**}})S^{\frac{N}{4}}.$
Then $J$ satisfies the $(PS)_c$ condition.
\end{lemma}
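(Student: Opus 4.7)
My plan is to follow the classical Br\'{e}zis--Nirenberg scheme, exploiting the compact embedding $H_0^2(\Omega)\hookrightarrow H_0^1(\Omega)$, which tames the nonlocal term in this range of dimensions. Let $\{u_n\}$ be a $(PS)_c$ sequence. First I establish boundedness by forming $J(u_n)-\frac{1}{\theta}\langle J'(u_n),u_n\rangle$ with the Ambrosetti--Rabinowitz exponent $\theta\in[4,2^{**})$ from $(f1)$. Because $2^{**}>4$ in the range $5\le N\le 7$, the three coefficients $\frac{1}{2}-\frac{1}{\theta}$, $b\bigl(\frac{1}{4}-\frac{1}{\theta}\bigr)$ and $\frac{1}{\theta}-\frac{1}{2^{**}}$ attached respectively to $\|u_n\|^2$, $\|\nabla u_n\|_2^4$ and $\|u_n\|_{2^{**}}^{2^{**}}$ are nonnegative, and the residual integral involving $f$ is nonnegative by $(f1)$; boundedness follows. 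Up to a subsequence, $u_n\rightharpoonup u$ in $H_0^2(\Omega)$, $u_n\to u$ in $L^p(\Omega)$ for $p<2^{**}$ and almost everywhere, and---crucially---$\nabla u_n\to\nabla u$ strongly in $L^2(\Omega)$ by the compact embedding $H_0^2\hookrightarrow H_0^1$. This forces $\|\nabla u_n\|_2^2\to\|\nabla u\|_2^2$, so the Kirchhoff coefficient $a+b\|\nabla u_n\|_2^2$ passes to the limit with the correct value; combined with $(f2)$, this lets me conclude that $u$ is a genuine weak solution of \eqref{eq1}, i.e.\ $J'(u)=0$.

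Set $v_n:=u_n-u$ and, passing to a further subsequence, let $l:=\lim\|v_n\|^2$ and $k:=\lim\|v_n\|_{2^{**}}^{2^{**}}$. The Br\'{e}zis--Lieb lemma for the critical term and the Hilbert identity for $\|\cdot\|$ give
\[
\|u_n\|^2=\|u\|^2+l+o(1),\qquad \|u_n\|_{2^{**}}^{2^{**}}=\|u\|_{2^{**}}^{2^{**}}+k+o(1).
\]
Subtracting $\langle J'(u),u\rangle=0$ from $\langle J'(u_n),u_n\rangle=o(1)$ and using the strong convergences above (which make the Kirchhoff difference and the $f$-term contribute only $o(1)$) produces the clean identity $l=k$. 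Combining the Sobolev inequality $\|\Delta v_n\|_2^2\ge S\|v_n\|_{2^{**}}^2$ with $\|v_n\|^2\ge\|\Delta v_n\|_2^2$ (valid because $a\ge 0$) yields $l\ge Sl^{2/2^{**}}$, so the dichotomy $l=0$ or $l\ge S^{N/4}$ holds.

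The hypothesis $c<\bigl(\frac{1}{2}-\frac{1}{2^{**}}\bigr)S^{N/4}$ is now used to rule out the second alternative. Expanding $J(u_n)-J(u)$ with the Br\'{e}zis--Lieb identities, the relation $l=k$ and the strong convergence of $\nabla u_n$, I obtain the exact formula $c=J(u)+\bigl(\frac{1}{2}-\frac{1}{2^{**}}\bigr)l$. Rewriting $J(u)=J(u)-\frac{1}{\theta}\langle J'(u),u\rangle$ reproduces the same nonnegative sum of coefficients as in the boundedness step, so $J(u)\ge 0$; hence $c\ge\bigl(\frac{1}{2}-\frac{1}{2^{**}}\bigr)l$, which together with $l\ge S^{N/4}$ contradicts the hypothesis. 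Therefore $l=0$, which together with $u_n\rightharpoonup u$ in the Hilbert space $H_0^2(\Omega)$ gives $u_n\to u$ strongly. The main structural point is that the compactness $H_0^2\hookrightarrow H_0^1$ effectively renders the Kirchhoff term local in the limit; this is precisely the feature that fails to save the argument when $N\ge 8$---where $2^{**}\le 4$ also breaks the very first boundedness step---necessitating the truncation approach of Section~4.
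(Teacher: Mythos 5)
Your argument is correct, but it takes a genuinely different route from the paper's. The paper proves Step~2 by invoking Lions' concentration-compactness principle (Lemma~\ref{jizhongjin}): it tests $J'(u_n)$ against $u_n\phi$ for a cutoff $\phi$ concentrated near each atom $x_i$, derives $\mu_i\le\nu_i$, and then rules out $\nu_i\ge S^{N/4}$ by plugging $\phi$ into $J(u_n)-\tfrac1\theta\langle J'(u_n),u_n\rangle$. You instead use the Br\'ezis--Lieb decomposition together with the Hilbert-space splitting of $\|\cdot\|$, prove $J'(u)=0$ first, subtract $\langle J'(u),u\rangle=0$ to get $l=k$, and then exploit the clean energy identity $c=J(u)+\bigl(\tfrac12-\tfrac1{2^{**}}\bigr)l$ combined with $J(u)=J(u)-\tfrac1\theta\langle J'(u),u\rangle\ge 0$ (here $4\le\theta<2^{**}$ is used in exactly the same way as in the boundedness step). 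Both arguments hinge on the same Sobolev dichotomy $l=0$ or $l\ge S^{N/4}$ and on the compact embedding $H_0^2\hookrightarrow H_0^1$ to localize the Kirchhoff term, but yours avoids the concentration-compactness machinery entirely and gives the contradiction with a single global identity rather than an atom-by-atom analysis. The one step worth writing out more carefully is the passage $\int_\Omega f(x,u_n)u_n\,\mathrm{d}x\to\int_\Omega f(x,u)u\,\mathrm{d}x$ and $\int_\Omega F(x,u_n)\,\mathrm{d}x\to\int_\Omega F(x,u)\,\mathrm{d}x$ \emph{before} you know $u_n\to u$ in $L^{2^{**}}$: this requires a Vitali-type uniform integrability argument, and it is precisely the little-$o$ growth $f(x,s)=o(|s|^{2^{**}-1})$ in $(f2)$ (rather than a mere big-$O$ bound) that makes the critical-power contribution uniformly small. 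Having flagged that, the proof is sound.
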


\begin{proof}
The proof is divided into three steps.

{\bf Step 1.} Let $\{u_n\}\subset H_0^2(\Omega)$ be a $(PS)_c$ sequence of $J$. We first prove that $\{u_n\}$ is bounded in $H_0^2(\Omega)$.
From the definition of the $(PS)_c$ sequence, we have
\begin{equation}\label{equ3.2}
J(u_n)\rightarrow c, \ \ J'(u_n)\rightarrow 0 \ in \ H^{-2}(\Omega), \ \ \ as \ n\rightarrow\infty,
\end{equation}
which implies, by recalling $(f1)$, that
\begin{equation*}
\begin{split}
c+1+o(1)\|u_n\|&=J(u_n)-\frac{1}{\theta}\langle J'(u_n),u_n\rangle\\
&=(\frac{1}{2}-\frac{1}{\theta})\|u_n\|^2+(\frac{1}{4}-\frac{1}{\theta})b(\int_{\Omega}|\nabla u_n|^2\mathrm{d}x)^2\\
& \ \ \ +\lambda\int_{\Omega}(\frac{1}{\theta}f(x,u_n)u_n-F(x,u_n))\mathrm{d}x
+(\frac{1}{\theta}-\frac{1}{2^{**}})\int_{\Omega}|u_n|^{2^{**}}\mathrm{d}x\\
&\geq(\frac{1}{2}-\frac{1}{\theta})\|u_n\|^2.
\end{split}
\end{equation*}
Therefore, $\{u_n\}$ is bounded in $H_0^2(\Omega)$.

{\bf Step 2.} Next, we show that $\{u_n\}$ has a strongly convergent subsequence in $L^{2^{**}}(\Omega)$.
By the boundedness of $\{u_n\}$ in $H_0^2(\Omega)$ and the Sobolev embedding, one sees that there is a subsequence
of $\{u_n\}$ (which we still denote by $\{u_n\}$) such that, as $n\rightarrow\infty$,
\begin{equation}\label{un}
\begin{split}
u_n&\rightharpoonup \ u \ in \ H_0^2(\Omega),\\
u_n&\rightarrow \ u \ in \ H_0^1(\Omega),\\
u_n&\rightarrow \ u \ in \ L^p(\Omega), \ for \ all \ 1\leq p<2^{**},\\
u_n&\rightarrow \ u \ a.e. \ in \ \Omega.
\end{split}
\end{equation}
According to Lemma \ref{jizhongjin}, there exists an at most countable index set $I$ and a collection of points $\{x_i\}_{i\in I}$ such that
\begin{equation}\label{uv1}
\begin{split}
&|\Delta u_n|^2\rightharpoonup\mu\geq |\Delta u|^2+\sum_{i\in I}\mu_i \delta_{x_i}, \ \ \mu_i>0,\\
&|u_n|^{2^{**}}\rightharpoonup \nu=|u|^{2^{**}}+\sum_{i\in I}\nu_i \delta_{x_i}, \ \ \nu_i>0,
\end{split}
\end{equation}
and
\begin{equation}\label{uv}
S\nu_i^{\frac{2}{2^{**}}}\leq\mu_i,\ (i\in I).
\end{equation}
We claim that $I=\emptyset$. Suppose on the contrary that $I\neq\emptyset$ and fix $i\in I$.
Choose a function $\phi(x)\in C_0^\infty(\Omega)$ such that $\phi(x)=1$ in $B(x_i,\rho)$, $\phi(x)=0$ in
$\Omega\backslash B(x_i,2\rho)$, and $0\leq \phi(x)\leq1$ otherwise. In addition, we assume that $|\nabla\phi|\leq\frac{2}{\rho}$
and $|\Delta\phi|\leq\frac{2}{\rho^2}$ in $\Omega$. According to \eqref{equ3.2} and the boundedness
of $\{u_n\phi\}$(which follows from the boundedness of $\{u_n\}$), we have
\begin{align}\label{123}
o(1)=&\langle J'(u_n),u_n\phi\rangle\nonumber\\
=&\int_{\Omega}\Delta{u_n}\Delta(u_n \phi)\mathrm{d}x+(a+b\int_{\Omega}|\nabla u_n|^2\mathrm{d}x)\int_{\Omega}\nabla {u_n}\nabla (u_n \phi)\mathrm{d}x-\lambda\int_{\Omega}f(x,u_n)u_n \phi\mathrm{d}x\nonumber\\
&-\int_{\Omega}|u_n|^{2^{**}-2}u_n u_n \phi\mathrm{d}x\nonumber\\
=&\int_{\Omega}|\Delta u_n|^2 \phi\mathrm{d}x+2\int_{\Omega}\Delta u_n\nabla u_n \nabla\phi\mathrm{d}x
+\int_{\Omega}\Delta u_n u_n \Delta\phi\mathrm{d}x\nonumber\\
&+(a+b\int_{\Omega}|\nabla u_n|^2\mathrm{d}x)\int_{\Omega}|\nabla u_n|^2\phi\mathrm{d}x
+(a+b\int_{\Omega}|\nabla u_n|^2\mathrm{d}x)\int_{\Omega}\nabla u_n u_n \nabla\phi\mathrm{d}x\nonumber\\
&-\lambda\int_{\Omega}f(x,u_n)u_n \phi\mathrm{d}x-\int_{\Omega}|u_n|^{2^{**}}\phi\mathrm{d}x\nonumber\\
:=&I_1+I_2+I_3+I_4+I_5+I_6+I_7.
\end{align}
By the boundedness of $\{u_n\}$, the definition of $\phi$, and
recalling \eqref{un} and H\"{o}lder's inequality, we can estimate $I_2$, $I_3$ and $I_5$, respectively, by
\begin{equation}\label{ine1}
\begin{split}
\limsup\limits_{n\rightarrow\infty}\left|I_2\right|
\leq&\limsup\limits_{n\rightarrow\infty}\left(\int_{\Omega\cap B(x_i,2\rho)}|\Delta u_n|^2\mathrm{d}x\right)^{\frac{1}{2}}\left(\int_{\Omega\cap B(x_i,2\rho)}|\nabla u_n \nabla\phi|^2\mathrm{d}x\right)^{\frac{1}{2}}\\
 \leq&C\left(\int_{\Omega\cap B(x_i,2\rho)}|\nabla u \nabla\phi|^2\mathrm{d}x\right)^{\frac{1}{2}}\\
 \leq&C\left(\int_{\Omega\cap B(x_i,2\rho)}|\nabla u|^{\frac{2N}{N-2}}\mathrm{d}x\right)^{\frac{N-2}{2N}}
 \left(\int_{\Omega\cap B(x_i,2\rho)}|\nabla\phi|^N\mathrm{d}x\right)^{\frac{1}{N}}\\
 \leq&C\left(\int_{\Omega\cap B(x_i,2\rho)}|\nabla u|^{\frac{2N}{N-2}}\mathrm{d}x\right)^{\frac{N-2}{2N}} \ \ \ \rightarrow  \ 0, \ \ as \ \rho\rightarrow 0,
\end{split}
\end{equation}

\begin{equation}\label{ine2}
\begin{split}
\limsup\limits_{n\rightarrow\infty}\left|I_3\right|
\leq&\limsup\limits_{n\rightarrow\infty}\left(\int_{\Omega\cap B(x_i,2\rho)}|\Delta u_n|^2\mathrm{d}x\right)^{\frac{1}{2}}\left(\int_{\Omega\cap B(x_i,2\rho)}|u_n \Delta\phi|^2\mathrm{d}x\right)^{\frac{1}{2}}\\
 \leq&C\left(\int_{\Omega\cap B(x_i,2\rho)}|u \Delta\phi|^2\mathrm{d}x\right)^{\frac{1}{2}}\\
 \leq&C\left(\int_{\Omega\cap B(x_i,2\rho)}|u|^{2^{**}}\mathrm{d}x\right)^{\frac{1}{2^{**}}}
 \left(\int_{\Omega\cap B(x_i,2\rho)}|\Delta\phi|^{\frac{N}{2}}\mathrm{d}x\right)^{\frac{2}{N}}\\
 \leq&C\left(\int_{\Omega\cap B(x_i,2\rho)}|u|^{2^{**}}\mathrm{d}x\right)^{\frac{1}{2^{**}}} \ \ \ \rightarrow  \ 0, \ \ as \ \rho\rightarrow 0,
\end{split}
\end{equation}
and
\begin{equation}\label{ine3}
\begin{split}
\limsup\limits_{n\rightarrow\infty}\left|I_5\right|
\leq&C\limsup\limits_{n\rightarrow\infty}
\left(\int_{\Omega\cap B(x_i,2\rho)}|u_n|^2\mathrm{d}x\right)^{\frac{1}{2}}\left(\int_{\Omega\cap B(x_i,2\rho)}|\nabla u_n \nabla\phi|^2\mathrm{d}x\right)^{\frac{1}{2}}\\
 \leq&C\left(\int_{\Omega\cap B(x_i,2\rho)}|\nabla u \nabla\phi|^2\mathrm{d}x\right)^{\frac{1}{2}}\\
 \leq&C\left(\int_{\Omega\cap B(x_i,2\rho)}|\nabla u|^{\frac{2N}{N-2}}\mathrm{d}x\right)^{\frac{N-2}{2N}}
 \left(\int_{\Omega\cap B(x_i,2\rho)}|\nabla\phi|^N\mathrm{d}x\right)^{\frac{1}{N}}\\
 \leq&C\left(\int_{\Omega\cap B(x_i,2\rho)}|\nabla u|^{\frac{2N}{N-2}}\mathrm{d}x\right)^{\frac{N-2}{2N}} \ \ \ \rightarrow  \ 0, \ \ as \ \rho\rightarrow 0.
\end{split}
\end{equation}
Moreover, in view of \eqref{fu}, one has
\begin{equation}\label{ine4}
\begin{split}
\limsup\limits_{n\rightarrow\infty}\left|I_6\right|
\leq&\limsup\limits_{n\rightarrow\infty}\lambda\left|\int_{\Omega\cap B(x_i,2\rho)}f(x,u_n)u_n\phi\mathrm{d}x\right|\\
\leq&\limsup\limits_{n\rightarrow\infty}\lambda\int_{\Omega\cap B(x_i,2\rho)}\left(C(\delta) u_n^2+\delta|u_n|^{2^{**}}\right)\phi\mathrm{d}x\\
\leq&C(\delta)\lambda\int_{\Omega\cap B(x_i,2\rho)} u^2\mathrm{d}x+C\lambda\delta \ \ \rightarrow \ \ C\lambda\delta, \ \ as \ \rho\rightarrow 0,
\end{split}
\end{equation}
for all $\delta>0$. Therefore, by letting $n\rightarrow\infty$ and $\varepsilon\rightarrow 0$ successively
in \eqref{123}, and recalling \eqref{uv1} \eqref{ine1}, \eqref{ine2}, \eqref{ine3}, \eqref{ine4} and the
nonnegativity of $I_4$, one arrives at $\mu_i\leq \nu_i$.
Combing this with \eqref{uv}, we have $\nu_i\geq S\nu_i^{\frac{2}{2^{**}}}$, which implies
$$(\rm I) \ \ \nu_i=0, \ \ \ or \ \ \ (\rm {II}) \ \ \nu_i\geq S^\frac{N}{4}.$$
Next we claim that (\rm II) cannot occur. If (\rm II) holds for some $i \in I$,
according to \eqref{equ3.2} and by the use of $(f1)$, \eqref{uv1}, and \eqref{uv},
we have
\begin{equation*}
\begin{split}
c=&\lim\limits_{n\rightarrow\infty}(J(u_n)-\frac{1}{\theta}\langle J'(u_n),u_n\rangle)\\
=&\lim\limits_{n\rightarrow\infty}\bigg\{(\frac{1}{2}-\frac{1}{\theta})\int_{\Omega}|\Delta u_n|^2\mathrm{d}x+(\frac{1}{2}-\frac{1}{\theta})a\int_{\Omega}|\nabla u_n|^2\mathrm{d}x+(\frac{1}{4}-\frac{1}{\theta})b(\int_{\Omega}|\nabla u_n|^2\mathrm{d}x)^2\\
&+\lambda\int_{\Omega}(\frac{1}{\theta}f(x,u_n)u_n-F(x,u_n))\mathrm{d}x
+(\frac{1}{\theta}-\frac{1}{2^{**}})\int_{\Omega}|u_n|^{2^{**}}\mathrm{d}x\bigg\}\\
\geq&\lim\limits_{n\rightarrow\infty}(\frac{1}{2}-\frac{1}{\theta})\int_{\Omega}|\Delta u_n|^2\phi\mathrm{d}x+(\frac{1}{\theta}-\frac{1}{2^{**}})\int_{\Omega}|u_n|^{2^{**}}\phi\mathrm{d}x\\
\geq&(\frac{1}{2}-\frac{1}{\theta})\mu_i+(\frac{1}{\theta}-\frac{1}{2^{**}})\nu_i\\
\geq&(\frac{1}{2}-\frac{1}{\theta})S\nu_i^{\frac{2}{2^{**}}}+(\frac{1}{\theta}-\frac{1}{2^{**}})\nu_i\\
\geq&(\frac{1}{2}-\frac{1}{2^{**}})S^\frac{N}{4},
\end{split}
\end{equation*}
which contradicts with $c<(\frac{1}{2}-\frac{1}{2^{**}})S^{\frac{N}{4}}$. Consequently, $\nu_i=0$ for all $i\in I$. Thus we get
\begin{equation}\label{linjiexinag}
\int_{\Omega}|u_n|^{2^{**}}\mathrm{d}x\rightarrow\int_{\Omega}|u|^{2^{**}}\mathrm{d}x, \ \ as \ \ n\rightarrow \infty.
\end{equation}

{\bf Step 3.} Last, we prove that $J$ satisfies the $(PS)_c$ condition.
From $J'(u_n)\rightarrow 0$ and the boundedness of $\{u_n-u\}$, it follows that
\begin{equation}\label{zhengmingPS}
\begin{split}
o(1)=&\langle J'(u_n),u_n-u\rangle\\
=&\int_{\Omega}|\Delta u_n|^2\mathrm{d}x-\int_{\Omega}\Delta u_n\Delta u\mathrm{d}x+(a+b\int_{\Omega}|\nabla u_n|^2\mathrm{d}x)(\int_{\Omega}|\nabla u_n|^2\mathrm{d}x-\int_{\Omega}\nabla u_n\nabla u\mathrm{d}x)\\
&-\lambda\int_{\Omega}f(x,u_n)(u_n-u)\mathrm{d}x-\int_{\Omega}|u_n|^{2^{**}-2}u_n(u_n-u)\mathrm{d}x.
\end{split}
\end{equation}
By the boundedness of $\{u_n\}$, the Sobolev embedding inequality, H\"{o}lder's inequality,
and recalling \eqref{fu}, \eqref{un} and \eqref{linjiexinag}, we have
\begin{equation}\label{1234}
\begin{split}
&\limsup\limits_{n\rightarrow\infty}\left|\left(a+b\int_{\Omega}|\nabla u_n|^2\mathrm{d}x\right)\left(\int_{\Omega}|\nabla u_n|^2\mathrm{d}x-\int_{\Omega}\nabla u_n\nabla u\mathrm{d}x\right)\right|\\
\leq&\limsup\limits_{n\rightarrow\infty}C\left|\int_{\Omega}|\nabla u_n|^2\mathrm{d}x-\int_{\Omega}\nabla u_n\nabla u\mathrm{d}x\right|\\
=&0,
\end{split}
\end{equation}

\begin{equation}\label{funu}
\begin{split}
&\limsup\limits_{n\rightarrow\infty}\left|\int_{\Omega}f(x,u_n)(u_n-u)\mathrm{d}x\right|\\
\leq&\limsup\limits_{n\rightarrow\infty}C(\delta)\int_{\Omega}|u_n||u_n-u|\mathrm{d}x
+\delta\int_{\Omega}\left|u_n\right|^{2^{**}-1}|u_n-u|\mathrm{d}x\\
\leq&\limsup\limits_{n\rightarrow\infty}C(\delta)\left(\int_{\Omega}|u_n|^2\mathrm{d}x\right)^{\frac{1}{2}}
\left(\int_{\Omega}|u_n-u|^2\mathrm{d}x\right)^{\frac{1}{2}}\\
&+\delta\left(\int_{\Omega}|u_n|^{2^{**}}\mathrm{d}x\right)^{\frac{2^{**}-1}{2^{**}}}
\left(\int_{\Omega}|u_n-u|^{2^{**}}\mathrm{d}x\right)^{\frac{1}{2^{**}}}\\
=&0,
\end{split}
\end{equation}
and
\begin{equation}\label{un-u}
\begin{split}
&\limsup\limits_{n\rightarrow\infty}\left|\int_{\Omega}|u_n|^{2^{**}-2}u_n(u_n-u)\mathrm{d}x\right|\\
\leq&\limsup\limits_{n\rightarrow\infty}\left(\int_{\Omega}|u_n|^{2^{**}}\mathrm{d}x\right)^{\frac{2^{**}-1}{2^{**}}}
\left(\int_{\Omega}|u_n-u|^{2^{**}}\mathrm{d}x\right)^{\frac{1}{2^{**}}}\\
\leq&\limsup\limits_{n\rightarrow\infty}C\left(\int_{\Omega}\left|u_n-u\right|^{2^{**}}\mathrm{d}x\right)^{\frac{1}{2^{**}}}\\
=&0.
\end{split}
\end{equation}
Therefore, letting $n\rightarrow\infty$ in \eqref{zhengmingPS} and
using \eqref{1234}, \eqref{funu} and \eqref{un-u}, we get
$$\int_{\Omega}|\Delta u_n|^2\mathrm{d}x-\int_{\Omega}\Delta u_n\Delta u\mathrm{d}x=0, \ \ as \ n\rightarrow\infty,$$
which, together with \eqref{un}, ensures that $\|u_n-u\|\rightarrow0$ as $n\rightarrow\infty$. This completes the proof.
\end{proof}

The successful application of the Mountain Pass Lemma in proving the existence of weak
solutions for $5\leq N\leq 7$ is based on the following lemma.

\begin{lemma}\label{lemma3.3}
Assume that $5\leq N\leq 7$. If there exists a function $u_0\in H_0^2(\Omega)\setminus\{0\}$ such that
\begin{equation}\label{supj}
\sup_{t\geq0}J(tu_0)<(\frac{1}{2}-\frac{1}{2^{**}})S^\frac{N}{4},
\end{equation}
then problem \eqref{eq1} possesses at least one nontrivial solution for all $\lambda>0$.
\end{lemma}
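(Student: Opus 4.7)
The plan is to combine Lemma \ref{Mountain Pass Lemma}, Lemma \ref{shanlujiegou} and Lemma \ref{jubuPStiaojian} to produce a nontrivial critical point of $J$ at the mountain pass level, with the key point being that the hypothesis \eqref{supj} forces the mountain pass level to sit below the threshold $(\frac{1}{2}-\frac{1}{2^{**}})S^{N/4}$ at which the local $(PS)$ condition is available.

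First I would upgrade the given $u_0\in H_0^2(\Omega)\setminus\{0\}$ to a valid endpoint for a mountain pass path. Since $5\le N\le 7$ forces $2^{**}>4$, the same computation performed in the proof of Lemma \ref{shanlujiegou}(ii) shows that $J(tu_0)\to -\infty$ as $t\to\infty$; hence there exists $t_0>0$ large enough so that, setting $v_0:=t_0u_0$, we have $\|v_0\|>r$ and $J(v_0)\le 0$, where $r,\alpha>0$ are the constants produced by Lemma \ref{shanlujiegou}(i). This choice ensures that $J$ satisfies the mountain pass geometry and that the path class
\[
\Gamma=\{\gamma\in C([0,1],H_0^2(\Omega)):\gamma(0)=0,\ \gamma(1)=v_0\}
\]
is nonempty. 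Define the mountain pass level $c_0:=\inf_{\gamma\in\Gamma}\max_{t\in[0,1]}J(\gamma(t))$, which by Lemma \ref{shanlujiegou}(i) satisfies $c_0\ge\alpha>0$.

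Next I would bound $c_0$ from above using the straight line test path $\gamma_0(t):=tv_0=tt_0u_0$, which belongs to $\Gamma$. Then
\[
c_0\le\max_{t\in[0,1]}J(tt_0u_0)\le\sup_{s\ge 0}J(su_0)<\Bigl(\frac{1}{2}-\frac{1}{2^{**}}\Bigr)S^{\frac{N}{4}},
\]
the last inequality being exactly hypothesis \eqref{supj}. So $0<c_0<(\frac{1}{2}-\frac{1}{2^{**}})S^{N/4}$.

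By Lemma \ref{Mountain Pass Lemma}, there exists a $(PS)_{c_0}$ sequence $\{u_n\}\subset H_0^2(\Omega)$ for $J$. Since $c_0$ lies below the compactness threshold, Lemma \ref{jubuPStiaojian} applies and guarantees, along a subsequence, $u_n\to u^*$ strongly in $H_0^2(\Omega)$. Continuity of $J$ and $J'$ then yields $J(u^*)=c_0$ and $J'(u^*)=0$, so $u^*$ is a critical point of $J$; and since $J(u^*)=c_0\ge\alpha>0=J(0)$, we have $u^*\ne 0$. By the standard identification of critical points of $J$ with weak solutions (Definition \ref{weaksolution}), $u^*$ is a nontrivial weak solution to problem \eqref{eq1} for the given $\lambda>0$.

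The only genuinely substantive step is the upper bound on $c_0$, which is essentially handed to us by \eqref{supj}; the real work of Theorem \ref{th1.1} is therefore pushed into the subsequent construction of a concrete test function $u_0$ (a truncated Talenti bubble, presumably) for which \eqref{supj} can be verified, and which is where condition $(f3)$ will enter. Everything else in the present lemma is a clean assembly of the mountain pass machinery already developed in Section 3.
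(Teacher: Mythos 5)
Your proof is correct and follows essentially the same route as the paper: apply Lemma \ref{shanlujiegou} to obtain the mountain pass geometry with endpoint $t_{u_0}u_0$, use hypothesis \eqref{supj} and the straight-line test path to bound the mountain pass level $c_0$ strictly below $(\frac{1}{2}-\frac{1}{2^{**}})S^{N/4}$, invoke Lemma \ref{jubuPStiaojian} to get $(PS)_{c_0}$ compactness, and conclude via Lemma \ref{Mountain Pass Lemma} that the strong limit is a nontrivial critical point. Your added remark that $J(u^*)=c_0\ge\alpha>0=J(0)$ forces $u^*\ne 0$ makes explicit a point the paper leaves implicit, but this is a minor presentational difference, not a different approach.
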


\begin{proof}
From Lemma \ref{shanlujiegou}, we know that $J$ has a mountain pass geometry around $0$.
According to the Mountain Pass Lemma, there exists a sequence $\{u_n\}\subset H_0^2(\Omega)$
such that $J(u_n)\rightarrow c_0$ and $J'(u_n)\rightarrow 0$ in $H^{-2}(\Omega)$ as $n\rightarrow \infty$,
where
$$0<\alpha\leq c_0:=\inf_{\gamma\in \Gamma}\max_{t\in[0,1]}J(\gamma(t)),\Gamma=\{\gamma\in C([0,1],H_0^2(\Omega)):\gamma(0)=0,\gamma(1)=t_{u_0}u_0\},$$
and $t_{u_0}>0$ is the constant determined in the proof of Lemma \ref{shanlujiegou}.
Moreover, it follows from \eqref{supj} that
$$c_0=\inf_{\gamma\in \Gamma}\max_{t\in [0,1]}J(\gamma(t))
\leq\max_{t\in [0,1]}J(tt_{u_0}u_0)\leq\sup_{t\geq0}J(tu_0)<(\frac{1}{2}-\frac{1}{2^{**}})S^\frac{N}{4}.$$
By virtue of Lemma \ref{jubuPStiaojian}, we know that $J$ satisfies the $(PS)_{c_0}$ condition,
and there exist a function $u\in H_0^2(\Omega)$ and a convergent subsequence of $\{u_n\}$,
still denoted by $\{u_n\}$, such that $u_n\rightarrow u$ in $H_0^2(\Omega)$ as $n\rightarrow \infty$.
Therefore, $J(u)=c_0$ and $J'(u)=0$, i.e.,
$u$ is a mountain pass type (and of course nontrivial) solution to problem \eqref{eq1}.
This completes the proof.
\end{proof}

To complete the proof of Theorem \ref{th1.1}, it remains to show that \eqref{supj}
is valid for some $u_0\in H_0^2(\Omega)$. Inspired by \cite{DD1990}, \cite{LHW2023} and \cite{RP1993},
we shall do this with the help of the truncated Talenti's function. For any $\varepsilon>0$, define
$$u_{\varepsilon}(x)=\frac{[N(N-4)(N^2-4)\varepsilon^4]^{\frac{N-4}{8}}}{(\varepsilon^2+|x|^2)^{\frac{N-4}{2}}}, \ \ x\in \mathbb{R}^N.$$
It is well known that $u_\varepsilon$ is a solution to the following critical problem
$$\Delta^2 u=u^{2^{**}-1}, \ \ x\in \ \mathbb{R}^N, \ N\geq5,$$
and $\|\Delta u_\varepsilon\|_2^2=\|u_\varepsilon\|_{2^{**}}^{2^{**}}=S^{\frac{N}{4}}$,
where $S>0$ is the constant given in Lemma \ref{jizhongjin} which can also be characterized as $S=\inf\limits_{u\in H_0^2(\Omega)\backslash\{0\}}\dfrac{\|\Delta u\|_2^2}{\|u\|_{2^{**}}^{2}}
=\dfrac{\|\Delta u_\varepsilon\|_2^2}{\|u_\varepsilon\|_{2^{**}}^{2}}$.

With no loss of generality, we may assume that $0\in \omega\subset\Omega$, where $\omega$ is given in $(f3)$.
For any $\varepsilon>0$, set $\psi_\varepsilon(x)=\varphi(x)u_\varepsilon(x)$,
where $\varphi(x)=\varphi(|x|)\in C^2(\overline{\Omega},\mathbb{R})$ is a given function defined in $\Omega$
with $supp\varphi\subset B(0,\varrho)\subset \Omega$ and
$$\varphi(0)=1, \ \varphi(\varrho)=\varphi'(\varrho)=0.$$
According to \cite{LvZongyan2021}, we have the following estimate:
\begin{equation}\label{es1}
\ \ \ \ \ \ \ \ \begin{cases}
\|\Delta\psi_\varepsilon\|_2^2=S^{\frac{N}{4}}+O(\varepsilon^{N-4}),\\
\|\nabla\psi_\varepsilon\|_2^2=O(\varepsilon),\\
\|\psi_\varepsilon\|_2^2=O(\varepsilon^{N-4}),\\
\|\psi_\varepsilon\|_{2^{**}}^{2^{**}}=S^{\frac{N}{4}}+O(\varepsilon^{c_N}),
\end{cases}
\end{equation}
where $c_N=1.81$ when $N=5$, $c_N=2.6$ when $N=6$, and $c_N=3.53$ when $N=7$.

\begin{lemma}\label{le3.4}
Assume that $5\leq N\leq 7$ and $f$ satisfies $(f1)$-$(f3)$. Then there exists
a constant $\varepsilon_1>0$ such that
\begin{equation}\label{level}
\sup_{t\geq0}J(t\psi_\varepsilon)<(\frac{1}{2}-\frac{1}{2^{**}})S^\frac{N}{4},
\end{equation}
for all $\varepsilon\in(0,\varepsilon_1)$.
\end{lemma}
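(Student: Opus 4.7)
The plan is to write $J(t\psi_\varepsilon)=h(t)-\lambda\int_\Omega F(x,t\psi_\varepsilon)\,dx$ with
\[
h(t):=\frac{t^2}{2}\|\psi_\varepsilon\|^2+\frac{bt^4}{4}\|\nabla\psi_\varepsilon\|_2^4-\frac{t^{2^{**}}}{2^{**}}\|\psi_\varepsilon\|_{2^{**}}^{2^{**}},
\]
upper-bound $\sup_{t\geq 0} h(t)$ using the expansions in \eqref{es1}, and then exploit $(f3)$ to extract a strictly larger negative contribution from $-\lambda\int F$. Both corrections will turn out to be of order exactly $\varepsilon$, so the proof reduces to choosing the constant supplied by $(f3)$ large enough to beat the positive error constant.

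As a preliminary reduction, I would restrict the supremum to a compact interval $[t_0,T]\subset(0,\infty)$ independent of small $\varepsilon$: for $t\in[0,t_0]$ use $J(t\psi_\varepsilon)\leq Ct^2+Ct^4$ (from $F\geq 0$ and \eqref{es1}) with $t_0$ small; for $t\geq T$ large, since $2^{**}>4$ and $\|\psi_\varepsilon\|_{2^{**}}^{2^{**}}\geq\tfrac12 S^{N/4}$ for small $\varepsilon$, the critical term dominates and $J(t\psi_\varepsilon)<0$. On $[t_0,T]$ the Kirchhoff term contributes $\frac{bt^4}{4}\|\nabla\psi_\varepsilon\|_2^4=O(\varepsilon^2)$, so applying the elementary identity $\sup_{t\geq0}\bigl[\frac{t^2}{2}A-\frac{t^{2^{**}}}{2^{**}}C\bigr]=\bigl(\frac12-\frac{1}{2^{**}}\bigr)A^{N/4}/C^{(N-4)/4}$ with the expansions in \eqref{es1} yields
\[
\sup_{t\in[t_0,T]}h(t)\leq\Big(\frac12-\frac{1}{2^{**}}\Big)S^{N/4}+C_1\varepsilon+o(\varepsilon),
\]
where $C_1\varepsilon$ absorbs the contributions from $a\|\nabla\psi_\varepsilon\|_2^2=aO(\varepsilon)$, from $\|\Delta\psi_\varepsilon\|_2^2-S^{N/4}=O(\varepsilon^{N-4})$ (with $N-4\in\{1,2,3\}$ and $c_N>N-4$), and from the Kirchhoff perturbation.

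For the lower bound on $\int F$, from $(f3)$, for any $M>0$ there exists $s_0>0$ with $f(x,s)\geq Ms^{N^2-14N+52}$ for $x\in\omega$, $s\geq s_0$, hence $F(x,s)\geq\frac{M}{q}(s^q-s_0^q)$ on the same set, where $q:=N^2-14N+53$. The key calculation is that $q(N-4)/2=N-1$ for $N\in\{5,6,7\}$, so the substitution $x=\varepsilon y$ yields
\[
\int_{B(0,r\sqrt{\varepsilon})}\psi_\varepsilon^q\,dx\sim\varepsilon^{N-q(N-4)/2}=\varepsilon.
\]
Since $0\in\omega$ is interior, for small $\varepsilon$ we have $\{\psi_\varepsilon\geq s_0/t_0\}\cap\omega\supset B(0,r_0\sqrt{\varepsilon})$, and since $t\geq t_0$ on $[t_0,T]$ we obtain $\lambda\int_\Omega F(x,t\psi_\varepsilon)\,dx\geq\lambda C_3 M\varepsilon-o(\varepsilon)$ uniformly (the subtracted term $\frac{Ms_0^q}{q}|B(0,r_0\sqrt{\varepsilon})|=O(\varepsilon^{N/2})$ is of lower order since $N\geq 5$). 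Choosing $M$ in $(f3)$ so large that $\lambda C_3 M>C_1$ and then $\varepsilon$ small, the inequality \eqref{level} follows for all $\varepsilon\in(0,\varepsilon_1)$.

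The main obstacle is the delicate matching of the $\varepsilon$-orders: the positive error in $\sup h$ must be tracked to order exactly $\varepsilon$ (with three separate contributions from the $a\|\nabla\psi_\varepsilon\|_2^2$ term, the expansion of $\|\Delta\psi_\varepsilon\|_2^2$, and the Kirchhoff perturbation), and it must be strictly dominated by the order-$\varepsilon$ lower bound on $\int F$ arising from the finely tuned exponent in $(f3)$. The restriction $5\leq N\leq 7$ is essential: it guarantees $2^{**}>4$ (so that the large-$t$ reduction and the supremum formula work), and it is exactly the range in which the identity $N^2-14N+52=(N+2)/(N-4)$ holds, which is what makes the level-set integral come out to the right order $\varepsilon$ in the computation above.
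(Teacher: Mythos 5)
Your proposal is correct and follows essentially the same strategy as the paper's: expand the quadratic and critical terms via \eqref{es1} to reduce the positive error in $\sup h$ to $O(\varepsilon)$, then invoke $(f3)$ together with the exponent arithmetic (which makes $(N^2-14N+53)(N-4)/2=N-1$ hold exactly for $N\in\{5,6,7\}$) to extract an order-$\varepsilon$ gain from $\int_\Omega F(x,t\psi_\varepsilon)\,\mathrm{d}x$ that dominates. The only cosmetic difference is that you bound the supremum over a fixed compact $t$-interval via the explicit one-variable maximum formula, whereas the paper works with the maximizer $t_\varepsilon$, shows it stays in a compact subinterval of $(0,\infty)$, and uses $Q(t_\varepsilon)\leq Q(1)$; the two treatments of the $t$-dependence are equivalent.
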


\begin{proof}
Consider the fibering map $j(t):(0,+\infty)\rightarrow \mathbb{R}$ defined by
$$j(t):=J(t\psi_\varepsilon)=\frac{t^2}{2}{\|\psi_\varepsilon\|}^2+\frac{bt^4}{4}\left(\int_{\Omega}|\nabla \psi_\varepsilon|^2\mathrm{d}x\right)^2-\lambda\int_{\Omega}F(x,t\psi_\varepsilon)\mathrm{d}x
-\frac{t^{2^{**}}}{2^{**}}\int_{\Omega}|\psi_\varepsilon|^{2^{**}}\mathrm{d}x.$$
In view of $(f1)$, \eqref{fu} and the fact that $2^{**}>4$, we have $\lim\limits_{t\rightarrow 0}j(t)=0$, $\lim\limits_{t\rightarrow +\infty}j(t)=-\infty$ and $j(t)>0$ for $t>0$ suitably small. Therefore, there exists a $t_\varepsilon>0$ such that $$j(t_\varepsilon)=\sup_{t\geq0}j(t),$$
and
\begin{equation}\label{df}
\|\psi_\varepsilon\|^2+b{t_\varepsilon}^2\left(\int_{\Omega}|\nabla \psi_\varepsilon|^2\mathrm{d}x\right)^2-
\frac{\lambda}{t_\varepsilon}\int_{\Omega}f(x,t_\varepsilon\psi_\varepsilon)\psi_\varepsilon\mathrm{d}x
-{t_\varepsilon}^{2^{**}-2}\int_{\Omega}|\psi_\varepsilon|^{2^{**}}\mathrm{d}x=0.
\end{equation}

We claim that there exist two positive constants $C_1$, $C_2$ such that
\begin{equation}\label{tisbounded}
0<C_1\leq t_\varepsilon \leq C_2<+\infty,
\end{equation}
uniformly for suitably small $\varepsilon$. Indeed, from \eqref{df} and $(f1)$ we have
\begin{equation}\label{1}
\|\psi_\varepsilon\|^2+b{t_\varepsilon}^2(\int_{\Omega}|\nabla \psi_\varepsilon|^2\mathrm{d}x)^2
\geq{t_\varepsilon}^{2^{**}-2}\int_{\Omega}|\psi_\varepsilon|^{2^{**}}\mathrm{d}x,
\end{equation}
which, together with \eqref{es1}, implies that $t_\varepsilon$ is bounded from the above.
As for the lower bound of $t_\varepsilon$, we use \eqref{fu}, \eqref{df} and Sobolev embedding inequality to obtain
\begin{equation}\label{2}
(1-\lambda\delta C)\|\psi_\varepsilon\|^2+b{t_\varepsilon}^2\left(\int_{\Omega}|\nabla \psi_\varepsilon|^2\mathrm{d}x\right)^2
\leq (1+\lambda C(\delta)){t_\varepsilon}^{2^{**}-2}\int_{\Omega}|\psi_\varepsilon|^{2^{**}}\mathrm{d}x,\\
\end{equation}
where $C>0$ is the embedding constant and $\delta<\frac{1}{\lambda C}$.
The lower bound of $t_\varepsilon$ follows from \eqref{es1} and \eqref{2}.

Next, we prove
\begin{equation}\label{limit t(varepsilon)}
\lim\limits_{\varepsilon\rightarrow 0}t_\varepsilon=1,
\end{equation}
which is easy to verify once we can show that
\begin{equation}\label{ft}
\lim\limits_{\varepsilon\rightarrow 0}\int_{\Omega}\frac{f(x,t_\varepsilon\psi_\varepsilon)\psi_\varepsilon}{t_\varepsilon}\mathrm{d}x=0.
\end{equation}
Indeed, if \eqref{ft} is valid, by letting $\varepsilon\rightarrow 0$ in \eqref{df} and
recalling \eqref{es1} one obtains \eqref{limit t(varepsilon)}. To show \eqref{ft},
we use \eqref{fu}, \eqref{es1} and \eqref{tisbounded} to obtain
\begin{equation*}
\begin{split}
&\limsup\limits_{\varepsilon\rightarrow 0}\left|\int_{\Omega}\frac{f(x,t_\varepsilon\psi_\varepsilon)\psi_\varepsilon}{t_\varepsilon}\mathrm{d}x\right|\\
\leq&\limsup\limits_{\varepsilon\rightarrow 0}\bigg(C(\delta)\int_{\Omega}\psi_\varepsilon^2\mathrm{d}x
+\delta t_\varepsilon^{2^{**}-2}\int_{\Omega}\psi_\varepsilon^{2^{**}}\mathrm{d}x\bigg)\\
\leq&\limsup\limits_{\varepsilon\rightarrow 0}\bigg(C(\delta)\int_{\Omega}\psi_\varepsilon^2\mathrm{d}x
+C\delta\int_{\Omega}\psi_\varepsilon^{2^{**}}\mathrm{d}x\bigg)\\
\leq&C S^{\frac{N}{4}}\delta
\end{split}
\end{equation*}
for all $\delta>0$. By the arbitrariness of $\delta>0$, \eqref{ft} follows.

Set $Q(t)=\dfrac{1}{2}t^2-\dfrac{1}{2^{**}}t^{2^{**}}$. Direct calculation shows
that $Q(t)\leq Q(1)=\dfrac{2}{N}$ for all $t>0$.
According to \eqref{es1}, we can draw the following conclusions.

For $N=5,6$,
\begin{equation}\label{level 5-6}
\begin{split}
&\sup_{t\geq0}J(t\psi_\varepsilon)=J(t_\varepsilon\psi_\varepsilon)\\
=&\frac{t_\varepsilon^2}{2}\int_{\Omega}|\Delta \psi_\varepsilon|^2\mathrm{d}x+\frac{at_\varepsilon^2}{2}\int_{\Omega}|\nabla \psi_\varepsilon|^2\mathrm{d}x+\frac{bt_\varepsilon^4}{4}(\int_{\Omega}|\nabla \psi_\varepsilon|^2\mathrm{d}x)^2-\lambda\int_{\Omega}F(x,t_\varepsilon\psi_\varepsilon)\mathrm{d}x\\
&-\frac{t_\varepsilon^{2^{**}}}{2^{**}}\int_{\Omega}|\psi_\varepsilon|^{2^{**}}\mathrm{d}x\\
\leq&(\frac{1}{2}-\frac{1}{2^{**}})S^{\frac{N}{4}}+O(\varepsilon)
-\lambda\int_{\Omega}F(x,t_\varepsilon\psi_\varepsilon)\mathrm{d}x.
\end{split}
\end{equation}

For $N=7$, if $a=0$,
\begin{equation}\label{level 7-1}
\begin{split}
&\sup_{t\geq0}J(t\psi_\varepsilon)=J(t_\varepsilon\psi_\varepsilon)\\
\leq&\frac{t_\varepsilon^2}{2}\int_{\Omega}|\Delta \psi_\varepsilon|^2\mathrm{d}x+\frac{bt_\varepsilon^4}{4}(\int_{\Omega}|\nabla \psi_\varepsilon|^2\mathrm{d}x)^2-\lambda\int_{\Omega}F(x,t_\varepsilon\psi_\varepsilon)\mathrm{d}x
-\frac{t_\varepsilon^{2^{**}}}{2^{**}}\int_{\Omega}|\psi_\varepsilon|^{2^{**}}\mathrm{d}x\\
\leq&(\frac{1}{2}-\frac{1}{2^{**}})S^{\frac{N}{4}}+O(\varepsilon^2)
-\lambda\int_{\Omega}F(x,t_\varepsilon\psi_\varepsilon)\mathrm{d}x.
\end{split}
\end{equation}

For $N=7$, if $a>0$,
\begin{equation}\label{level 7-2}
\begin{split}
&\sup_{t\geq0}J(t\psi_\varepsilon)=J(t_\varepsilon\psi_\varepsilon)\\
\leq&\frac{t_\varepsilon^2}{2}\int_{\Omega}|\Delta \psi_\varepsilon|^2\mathrm{d}x+\frac{at_\varepsilon^2}{2}\int_{\Omega}|\nabla \psi_\varepsilon|^2\mathrm{d}x+\frac{bt_\varepsilon^4}{4}(\int_{\Omega}|\nabla \psi_\varepsilon|^2\mathrm{d}x)^2-\lambda\int_{\Omega}F(x,t_\varepsilon\psi_\varepsilon)\mathrm{d}x\\
&-\frac{t_\varepsilon^{2^{**}}}{2^{**}}\int_{\Omega}|\psi_\varepsilon|^{2^{**}}\mathrm{d}x\\
\leq&(\frac{1}{2}-\frac{1}{2^{**}})S^{\frac{N}{4}}+O(\varepsilon)
-\lambda\int_{\Omega}F(x,t_\varepsilon\psi_\varepsilon)\mathrm{d}x.
\end{split}
\end{equation}
It is easily seen from \eqref{level 5-6}-\eqref{level 7-2} that \eqref{level} is true if we can prove
\begin{equation}\label{Fe}
\lim\limits_{\varepsilon\rightarrow 0}\varepsilon^{-1}\int_{\Omega}F(x,t_\varepsilon\psi_\varepsilon)\mathrm{d}x=+\infty.
\end{equation}

It remains to show that \eqref{Fe} is valid to complete the proof.
The argument is almost the same as that in the proof of Lemma \ref{Mountain Pass Lemma} in \cite{BreNi},
and we only sketch the outline here for the convenience of the reader.
Set $\underline{F}(s):=\inf_{x\in \omega} F(x,s)$. According to $(f1)$,
$F(x,s)$ is nondecreasing in $s\geq0$ for each $x\in\Omega$ and so is $\underline{F}(s)$ the definition.
Therefore,
\begin{equation*}
\begin{split}
&\varepsilon^{-1}\int_{\Omega}F(x,t_\varepsilon\psi_\varepsilon)\mathrm{d}x
\geq\varepsilon^{-1}\int_{\omega}\underline{F}(t_\varepsilon\psi_\varepsilon)\mathrm{d}x\\
=&\varepsilon^{-1}\int_{\omega}\underline{F}\left(t_\varepsilon\varphi(x) \frac{[N(N-4)(N^2-4)]^{\frac{N-4}{8}}\varepsilon^{\frac{N-4}{2}}}{(\varepsilon^2+|x|^2)^\frac{N-4}{2}}\right)\mathrm{d}x\\
\geq&\varepsilon^{-1}\int_{B_{r_0}}\underline{F}\left(t_\varepsilon C_0 \frac{[N(N-4)(N^2-4)]^{\frac{N-4}{8}}\varepsilon^{\frac{N-4}{2}}}{(\varepsilon^2+|x|^2)^\frac{N-4}{2}}\right)\mathrm{d}x\\
:=&\varepsilon^{-1}\int_{B_{r_0}}\underline{F}\left(C(N,\varepsilon) \frac{\varepsilon^{\frac{N-4}{2}}}{(\varepsilon^2+|x|^2)^\frac{N-4}{2}}\right)\mathrm{d}x\\
=&\omega_N\varepsilon^{-1}\int_{0}^{r_0} \underline{F}\left(C(N,\varepsilon) \frac{\varepsilon^{\frac{N-4}{2}}}{(\varepsilon^2+s^2)^\frac{N-4}{2}}\right)s^{N-1}\mathrm{d}s\\
=&\omega_N\varepsilon^{N-1}\int_{0}^{\frac{r_0}{\varepsilon}} \underline{F}\left(C(N,\varepsilon) \frac{\varepsilon^{\frac{4-N}{2}}}{(1+t^2)^\frac{N-4}{2}}\right)t^{N-1}\mathrm{d}t,
\end{split}
\end{equation*}
where $C(N,\varepsilon):=t_\varepsilon C_0 [N(N-4)(N^2-4)]^{\frac{N-4}{8}}(N=5,6,7)$,
$C_0$ is a positive constant and $r_0$ is suitably small such that $\varphi(x)\geq C_0$ in $B_{r_0}$.
In view of \eqref{tisbounded}, we know that there exist two positive constants $C_3$, $C_4$ such that
\begin{equation}\label{cnisbounded}
0<C_3\leq C(N,\varepsilon) \leq C_4<+\infty.
\end{equation}
By the monotonicity of $\underline{F}(s)$ and \eqref{cnisbounded}, one has
\begin{equation*}
\begin{split}
&\omega_N\varepsilon^{N-1}\int_{0}^{\frac{r_0}{\varepsilon}} \underline{F}\left(C(N,\varepsilon) \frac{\varepsilon^{\frac{4-N}{2}}}{(1+t^2)^\frac{N-4}{2}}\right)t^{N-1}\mathrm{d}t\\
\geq&\omega_N\varepsilon^{N-1}\int_{0}^{\frac{r_0}{\varepsilon}} \underline{F}\left(C_3 \frac{\varepsilon^{\frac{4-N}{2}}}{(1+t^2)^\frac{N-4}{2}}\right)t^{N-1}\mathrm{d}t.
\end{split}
\end{equation*}

By $(f3)$ and the definition of $\underline{F}$, for each $M>0$,
there exists a constant $s_0>0$ such that $\underline{F}(s)\geq Ms^{N^2-14N+53}$ for all $s\geq s_0$.
Notice that there exists a constant $C_5>0$, independent of $\varepsilon>0$, such that
$$\frac{C_3\varepsilon^{\frac{4-N}{2}}}{(1+t^2)^\frac{N-4}{2}}\geq s_0,$$
for all $t\leq C_5\varepsilon^{-\frac{1}{2}}\leq r_0 \varepsilon^{-1}$ if $\varepsilon>0$ is sufficiently small. Using this fact, we get
\begin{equation*}
\begin{split}
&\omega_N\varepsilon^{N-1}\int_{0}^{\frac{r_0}{\varepsilon}} \underline{F}\left(C_3 \frac{\varepsilon^{\frac{4-N}{2}}}{(1+t^2)^\frac{N-4}{2}}\right)t^{N-1}\mathrm{d}t\\
 \geq&\omega_N\varepsilon^{N-1}\int_{0}^{\frac{C_5}{\sqrt{\varepsilon}}}
 M\left(\frac{C_3\varepsilon^{\frac{4-N}{2}}}{(1+t^2)^\frac{N-4}{2}}\right)^{N^2-14N+53}t^{N-1}\mathrm{d}t.
\end{split}
\end{equation*}
When $N=5,6,7$, we get
\begin{equation*}
\begin{split}
&\omega_N\varepsilon^{N-1}\int_{0}^{\frac{C_5}{\sqrt{\varepsilon}}}
 M\left(\frac{C_3\varepsilon^{\frac{4-N}{2}}}{(1+t^2)^\frac{N-4}{2}}\right)^{N^2-14N+53}t^{N-1}\mathrm{d}t\\
=&\omega_NM(C_3)^{N^2-14N+53}\int_{0}^{\frac{C_5}{\sqrt{\varepsilon}}}\frac{t^{N-1}}{(1+t^2)^{N-1}}\mathrm{d}t\\
\geq&CM, \ \ \ \forall \ M>0.
\end{split}
\end{equation*}
Therefore, we get \eqref{Fe}. This completes the proof.
\end{proof}

{\bf Proof of Theorem \ref{th1.1}.} When $5\leq N\leq 7$, assume that $f$ satisfies $(f1)$-$(f3)$.
It follows from Lemma \ref{le3.4} that condition \eqref{supj} holds. Consequently,
problem \eqref{eq1} possesses at least one nontrivial solution for all $\lambda>0$
by Lemma \ref{lemma3.3}. This completes the proof.

\par
\section{The case $N\geq 8$}
\setcounter{equation}{0}

In this section, we consider problem \eqref{eq1} with $N\geq8$. It is worthy pointing out
that the method used in Section 3 is not applicable to this case since the boundedness of
the (PS) sequence can not be obtained in general. Inspired by \cite{Figueiredo2013}, we introduce
an appropriate truncation on the nonlocal coefficient to overcome this difficulty.

Assume that $a>0$, $b>0$ and $\lambda>0$. Let $\psi(t)=a+bt$.
Then $\psi(t):[0,+\infty)\rightarrow \mathbb{R}^+$ is strictly increasing, unbounded
and $\psi(t)\geq a$ for all $t\geq0$. So given $c_0\in \mathbb{R}$ such that
$a<c_0<\frac{\theta}{2}a$, there exists a unique $t_0 > 0$ such that $\psi(t_0)=c_0$.
Set
\begin{eqnarray*}
\ \ \ \ \ \ \ \ \psi_0(t)=\begin{cases}
\psi(t),&if \ 0\leq t\leq t_0,\\
c_0,&if \ t\geq t_0.
\end{cases}
\end{eqnarray*}
Obviously, we can get
\begin{equation}\label{ine4.1}
a\leq\psi_0(t)\leq c_0, \qquad t\geq0.
\end{equation}

The proof of Theorem \ref{th1.2} is based on a careful study of the following auxiliary problem:
\begin{equation}\label{eq2}
\ \ \ \ \ \ \ \ \begin{cases}
\Delta^2u-\psi_0(\int_{\Omega}|\nabla{u}|^2\mathrm{d}x) \Delta{u}=\lambda{f(x,u)}+|u|^{2^{**}-2}u,&x\in\Omega,\\
u=\dfrac{\partial u}{\partial\overrightarrow{n}}=0,&x\in\partial\Omega.
\end{cases}
\end{equation}
The energy functional associated with problem \eqref{eq2} is given by
$$J_\psi(u)=\frac{1}{2}\int_{\Omega}|\Delta u|^2\mathrm{d}x+\frac{1}{2}\widehat{\psi_0}(\int_{\Omega}|\nabla u|^2\mathrm{d}x)
-\lambda\int_{\Omega}F(x,u)\mathrm{d}x-\frac{1}{2^{**}}\int_{\Omega}|u|^{2^{**}}\mathrm{d}x,$$
where $\widehat{\psi_0}(t)=\int_0^t\psi_0(s)\mathrm{d}s$ satisfies
\begin{equation}\label{88}
 at\leq\widehat{\psi_0}(t)\leq c_0t, \ \ t\geq0.
\end{equation}
Moreover, the derivative operator of $J_\psi$ is given by
\begin{equation*}
\begin{split}
\langle J_\psi'(u),v\rangle=&\int_{\Omega}\Delta u\Delta v\mathrm{d}x+\psi_0(\int_{\Omega}|\nabla u|^2\mathrm{d}x)\int_{\Omega}\nabla u\nabla v\mathrm{d}x
-\lambda\int_{\Omega}f(x,u)v\mathrm{d}x-\int_{\Omega}|u|^{2^{**}-2}uv\mathrm{d}x,
\end{split}
\end{equation*}
for all $u,\ v\in H_0^2(\Omega)$.

It is easily seen that nontrivial critical point $u$ of $J_\psi(\cdot)$ corresponds to
nontrivial solution to problem \eqref{eq2}. Furthermore, if $\|\nabla u\|_2^2<t_0$,
$u$ is also a solution to the original problem \eqref{eq1}. We first show the following auxiliary result.

\begin{theorem}\label{th4.1}
For the case $N\geq8$, let $a>0, b>0$, and assume that $f$ satisfies $(f1)$, $(f2)$ and $(f4)$.
Then there exists a constant $\lambda_0\geq0$ such that problem \eqref{eq2} has at least one
nontrivial solution for all $\lambda>\lambda_0$.
\end{theorem}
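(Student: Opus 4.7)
The plan is to apply the Mountain Pass Lemma to the truncated functional $J_\psi$. Since $\psi_0\leq c_0$, the nonlocal coefficient is bounded and this restores boundedness of Palais--Smale sequences even in the delicate range $2^{**}\leq 4$. The argument has three moving parts: (i) mountain pass geometry and boundedness of $(PS)_c$ sequences, (ii) a compactness analysis showing that the only obstruction to producing a critical point is concentration at the threshold $(\tfrac{1}{2}-\tfrac{1}{2^{**}})S^{N/4}$, and (iii) lowering the mountain pass level below this threshold by means of $(f4)$ and a plateau bump function, which forces $\lambda$ to be large.

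The mountain pass geometry is routine. For the lower barrier, $\widehat{\psi_0}(t)\geq at$ from \eqref{88} together with $(f2)$ and the Sobolev embedding yields $J_\psi(u)\geq\alpha(\lambda)>0$ on $\{\|u\|=r(\lambda)\}$; for a point where $J_\psi$ is nonpositive, $\widehat{\psi_0}(t)\leq c_0 t$ gives $J_\psi(tu_0)\to-\infty$ as $t\to\infty$ for any fixed $u_0\neq 0$, using $2^{**}>2$. Now let $\{u_n\}$ be a $(PS)_c$-sequence. Testing $J_\psi-\tfrac{1}{\theta}\langle J_\psi',\cdot\rangle$ and using $\theta>2$, the nonnegativity of $\tfrac{1}{\theta}f(x,u_n)u_n-F(x,u_n)$ from $(f1)$, and the explicit expression $\tfrac{1}{2}\widehat{\psi_0}(t)-\tfrac{1}{\theta}\psi_0(t)\,t=c_0(\tfrac{1}{2}-\tfrac{1}{\theta})t-\tfrac{bt_0^2}{4}$ for $t>t_0$ to absorb $\|\nabla u_n\|_2^2$, one sees that $\{u_n\}$ is bounded in $H_0^2(\Omega)$. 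Extracting a subsequence, $u_n\rightharpoonup u$ in $H_0^2(\Omega)$, $u_n\to u$ in $H_0^1(\Omega)$ and in $L^p(\Omega)$ for all $p<2^{**}$, and $u_n\to u$ a.e.; a standard limit argument, exploiting the continuity of $\psi_0$ and the strong $H_0^1$-convergence of $\|\nabla u_n\|_2^2$, shows $u$ is a critical point of $J_\psi$, so we are done as soon as $u\not\equiv 0$.

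Suppose then $u\equiv 0$. Applying Lemma~\ref{jizhongjin} and testing $\langle J_\psi'(u_n),u_n\phi\rangle=o(1)$ with cut-offs around each concentration point as in Lemma~\ref{jubuPStiaojian} (the truncated gradient term has a favourable sign and the error terms still vanish), one gets $\mu_i\leq\nu_i$, hence $\nu_i=0$ or $\nu_i\geq S^{N/4}$. If all $\nu_i=0$, then $\int_\Omega|u_n|^{2^{**}}\to 0$ and the identity $\langle J_\psi'(u_n),u_n\rangle=o(1)$ combined with $u_n\to 0$ in $H_0^1(\Omega)$ forces $u_n\to 0$ in $H_0^2(\Omega)$, whence $c=0$. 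Otherwise, some $\nu_{i_0}\geq S^{N/4}$. Writing $c=\lim[J_\psi(u_n)-\tfrac{1}{\theta}\langle J_\psi'(u_n),u_n\rangle]$, noting that $\|\nabla u_n\|_2^2\to 0$ makes the nonlocal contribution vanish and that the $\lambda$-integral is nonnegative by $(f1)$ and Fatou's lemma, one obtains
$$c\geq\bigl(\tfrac{1}{2}-\tfrac{1}{\theta}\bigr)\mu_{i_0}+\bigl(\tfrac{1}{\theta}-\tfrac{1}{2^{**}}\bigr)\nu_{i_0}\geq\bigl(\tfrac{1}{2}-\tfrac{1}{2^{**}}\bigr)S^{N/4},$$
the last step being the minimisation of $(\tfrac{1}{2}-\tfrac{1}{\theta})S\nu^{2/2^{**}}+(\tfrac{1}{\theta}-\tfrac{1}{2^{**}})\nu$ over $\nu\geq S^{N/4}$. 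Consequently, whenever $0<c_0(\lambda)<(\tfrac{1}{2}-\tfrac{1}{2^{**}})S^{N/4}$, the case $u\equiv 0$ is impossible and a nontrivial critical point is produced.

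The main obstacle is thus to verify $c_0(\lambda)<(\tfrac{1}{2}-\tfrac{1}{2^{**}})S^{N/4}$ for $\lambda$ large, and this is the only place where $(f4)$ is used. Without loss of generality fix $x_0\in\widetilde{\omega}$ and a plateau cut-off $\phi\in C_c^\infty(\widetilde{\omega})$ with $\phi\equiv 1$ on $B(x_0,r/2)$, $0\leq\phi\leq 1$ and $\mathrm{supp}\,\phi\subset B(x_0,r)\subset\widetilde{\omega}$; fix $M\in I$ and set $v_0:=M\phi$, so $v_0\equiv M\in I$ on $\{\phi=1\}$. By $(f4)$ and the monotonicity of $F$ in its second variable, $F(x,sv_0)>0$ on $\{\phi=1\}$ whenever $sM>\inf I$, and $s\mapsto\int_\Omega F(x,sv_0)\,dx$ is continuous, strictly positive on $((\inf I)/M,\infty)$ and bounded below by some $\eta_0>0$ on any compact subinterval. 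On $[0,(\inf I)/M]$ we have $F\geq 0$ pointwise, so using $\widehat{\psi_0}(t)\leq c_0 t$,
$$\sup_{s\in[0,(\inf I)/M]}J_\psi(sv_0)\leq\tfrac{(\inf I)^2}{2}\bigl(\|\Delta\phi\|_2^2+c_0\|\nabla\phi\|_2^2\bigr);$$
plateau-bump scaling gives $\|\Delta\phi\|_2^2=O(r^{N-4})$ and $\|\nabla\phi\|_2^2=O(r^{N-2})$, both vanishing as $r\to 0$ since $N\geq 8$, so a single small $r>0$ can be chosen making this bound strictly less than $(\tfrac{1}{2}-\tfrac{1}{2^{**}})S^{N/4}$. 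On the complementary interval $((\inf I)/M,T]$, with $T$ so large that $J_\psi(Tv_0)\leq 0$, the estimate $J_\psi(sv_0)\leq\tfrac{s^2}{2}(\|\Delta v_0\|_2^2+c_0\|\nabla v_0\|_2^2)-\tfrac{s^{2^{**}}}{2^{**}}\|v_0\|_{2^{**}}^{2^{**}}-\lambda\eta_0$ becomes negative once $\lambda\geq\lambda_0$ with $\lambda_0>0$ sufficiently large. Taking $\gamma(t):=tTv_0$ in the definition of $c_0(\lambda)$, we conclude $0<c_0(\lambda)<(\tfrac{1}{2}-\tfrac{1}{2^{**}})S^{N/4}$ for all $\lambda>\lambda_0$, and the preceding paragraph then delivers the desired nontrivial solution to \eqref{eq2}.
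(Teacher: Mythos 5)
Your overall strategy (truncated functional, mountain pass geometry, bounded $(PS)_c$ sequences via $\psi_0\leq c_0$, concentration--compactness dichotomy, lowering the level below $(\tfrac12-\tfrac1{2^{**}})S^{N/4}$) matches the paper's, but you take a genuinely different route at two points. First, instead of proving the full $(PS)_c$ condition (as the paper does in Lemma~\ref{le4.3}), you argue that the weak limit $u$ is already a critical point and then rule out $u\equiv 0$ by the same concentration analysis; this is logically weaker but sufficient, and it avoids the strong-convergence bookkeeping. Second, and more substantially, you bound the mountain pass level by an explicit construction: a shrinking plateau bump $v_0=M\phi$ together with the scaling $\|\Delta\phi\|_2^2=O(r^{N-4})$, $\|\nabla\phi\|_2^2=O(r^{N-2})$, plus largeness of $\lambda$. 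The paper instead shows $c_\lambda\to 0$ (Lemma~\ref{le4.2}) by analysing the maximiser $t_\lambda$ of a fixed fibering map and sending $t_\lambda\to 0$; the paper's version gives more (it is also what drives the proof of Theorem~\ref{th1.2}), while your version is more hands-on and dimension-explicit.

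There is a real (though easily repaired) gap in your level estimate. You invoke a lower bound $\int_\Omega F(x,sv_0)\,dx\geq\eta_0>0$ on the interval $((\inf I)/M,T]$, citing positivity and continuity, but $\int_\Omega F(x,sv_0)\,dx\to 0$ as $s\to((\inf I)/M)^+$ because $F(x,\inf I)$ may vanish (indeed $(f1)$ and $(f4)$ allow $f(x,\cdot)\equiv 0$ on $[0,\inf I]$); the infimum of $\int F$ over $((\inf I)/M,T]$, or even over the closed interval $[(\inf I)/M,T]$, is then zero, so no single $\eta_0>0$ works there. Consequently the estimate $J_\psi(sv_0)\leq\tfrac{s^2}{2}(\|\Delta v_0\|_2^2+c_0\|\nabla v_0\|_2^2)-\lambda\eta_0$ is not uniform near $s=(\inf I)/M$ and the asserted choice of $\lambda_0$ does not quite close the argument. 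The fix is to overlap the two regimes: pick $S_1>(\inf I)/M$, bound $J_\psi(sv_0)\leq\tfrac{S_1^2 M^2}{2}(\|\Delta\phi\|_2^2+c_0\|\nabla\phi\|_2^2)$ on $[0,S_1]$ (still $O(r^{N-4})$, so still made strictly below the threshold for $r$ small), and only on the compact interval $[S_1,T]$ set $\eta_0:=\min_{s\in[S_1,T]}\int_\Omega F(x,sv_0)\,dx>0$ and take $\lambda$ large. With that modification the proposal is correct.
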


Theorem \ref{th4.1} is proved by a series of lemmas. We first show that the energy functional
$J_\psi(u)$ satisfies the mountain pass geometry.

\begin{lemma}\label{le4.1}
Suppose that $f$ satisfies $(f1)$ and $(f2)$. Then we have

$\mathrm{(i)}$ \ there exist $\alpha,r>0$ such that $J_\psi(u)\geq\alpha$ for all $u\in H_0^2(\Omega)$ with $\|u\|=r$.

$\mathrm{(ii)}$ there exists a function $v_0\in H_0^2(\Omega)$ such that $\|v_0\|> r$ and $J_\psi(v_0)\leq0$.
\end{lemma}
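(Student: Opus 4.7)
The plan is to mimic the proof of Lemma~\ref{shanlujiegou}, using the two-sided linear bound \eqref{88} on $\widehat{\psi_0}$ as a surrogate for the explicit Kirchhoff term handled in Section~3. Since the truncation replaces the quartic growth of the original Kirchhoff term by a quadratic one (in $\|u\|$), it will actually make part~(ii) easier, not harder, despite the fact that $2^{**}\leq 4$ when $N\geq 8$.

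For part~(i), I would first invoke the pointwise bound \eqref{fu} (derived from $(f1)$ and $(f2)$) together with $(f1)$ itself to estimate $F$ by the same $\delta s^2+C(\delta)|s|^{2^{**}}$-type expression used in Lemma~\ref{shanlujiegou}. Using the lower bound $\widehat{\psi_0}(t)\geq at$ from \eqref{88}, I then drop the Kirchhoff contribution downward to get
$$J_\psi(u)\geq \frac{1}{2}\int_{\Omega}|\Delta u|^2\,\mathrm{d}x+\frac{a}{2}\int_{\Omega}|\nabla u|^2\,\mathrm{d}x - \lambda\int_{\Omega}F(x,u)\,\mathrm{d}x - \frac{1}{2^{**}}\int_{\Omega}|u|^{2^{**}}\,\mathrm{d}x = \frac{1}{2}\|u\|^2 - \lambda\int_{\Omega}F(x,u)\,\mathrm{d}x - \frac{1}{2^{**}}\int_{\Omega}|u|^{2^{**}}\,\mathrm{d}x.$$
From this point the Sobolev embedding computation is literally the one in Lemma~\ref{shanlujiegou}: on $\|u\|=r$ we obtain $J_\psi(u)\geq(\tfrac{1}{2}-\lambda\delta C_1)r^2-(\lambda C(\delta)+\tfrac{1}{2^{**}})C_2 r^{2^{**}}$, and fixing $\delta<\tfrac{1}{2\lambda C_1}$ then choosing $r$ small produces the required $\alpha,r>0$, using only that $2^{**}>2$.

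For part~(ii), I would fix any $u\in H_0^2(\Omega)\setminus\{0\}$ and use $F\geq 0$ (which follows from $(f1)$) together with the upper bound $\widehat{\psi_0}(t)\leq c_0 t$ from \eqref{88} to obtain
$$J_\psi(tu)\leq \frac{t^2}{2}\int_{\Omega}|\Delta u|^2\,\mathrm{d}x+\frac{c_0 t^2}{2}\int_{\Omega}|\nabla u|^2\,\mathrm{d}x-\frac{t^{2^{**}}}{2^{**}}\int_{\Omega}|u|^{2^{**}}\,\mathrm{d}x$$
for every $t>0$. Because $2^{**}>2$ irrespective of $N$, the critical term dominates as $t\to+\infty$, hence $J_\psi(tu)\to-\infty$; a sufficiently large $t_u>0$ then satisfies both $\|t_u u\|>r$ and $J_\psi(t_u u)\leq 0$, so one takes $v_0:=t_u u$. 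The only obstacle worth flagging is the very one the truncation was designed to remove: the untruncated problem contributes $\tfrac{bt^4}{4}\|\nabla u\|_2^4$ to $J(tu)$, a growth that for $N\geq 8$ (i.e.\ $2^{**}\leq 4$) cannot be dominated by $-\tfrac{t^{2^{**}}}{2^{**}}\|u\|_{2^{**}}^{2^{**}}$ as $t\to\infty$; replacing it by $\tfrac{1}{2}\widehat{\psi_0}(\|\nabla u\|_2^2)$, which grows only like $t^2$ in $tu$, restores the domination by the critical term and closes part~(ii).
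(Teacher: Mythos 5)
Your proof is correct and is essentially identical to the paper's argument: both parts use the two-sided bound \eqref{88} on $\widehat{\psi_0}$ together with \eqref{fu} and $(f1)$, with the observation that the truncated Kirchhoff term contributes only quadratic growth in $t$ so that $2^{**}>2$ alone suffices for the mountain pass geometry. No further comment is needed.
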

\begin{proof}
 $\mathrm{(i)}$ Take $u\in H_0^2(\Omega)$ with $\|u\|=r$. Using \eqref{fu}, \eqref{88} and Sobolev embedding inequality, we get
\begin{equation*}
\begin{split}
J_\psi(u)=&\frac{1}{2}\int_{\Omega}|\Delta u|^2\mathrm{d}x+\frac{1}{2}\widehat{\psi_0}(\int_{\Omega}|\nabla u|^2\mathrm{d}x)-\lambda\int_{\Omega}F(x,u)\mathrm{d}x-\frac{1}{2^{**}}\int_{\Omega}|u|^{2^{**}}\mathrm{d}x\\
\geq&\frac{1}{2}\int_{\Omega}|\Delta u|^2\mathrm{d}x+\frac{1}{2}a\int_{\Omega}|\nabla u|^2\mathrm{d}x-\lambda\delta\int_{\Omega}u^2\mathrm{d}x-\lambda C(\delta)\int_{\Omega}|u|^{2^{**}}\mathrm{d}x-\frac{1}{2^{**}}\int_{\Omega}|u|^{2^{**}}\mathrm{d}x\\
\geq&\frac{1}{2}\|u\|^2-\lambda\delta C_1\|u\|^2-(\lambda C(\delta)+\frac{1}{2^{**}})C_2\|u\|^{2^{**}}\\
=&(\frac{1}{2}-\lambda\delta C_1)r^2-(\lambda C(\delta)+\frac{1}{2^{**}})C_2r^{2^{**}},
\end{split}
\end{equation*}
where $C_1, C_2 >0$ are the Sobolev embedding constants. Taking $\delta<\frac{1}{2\lambda C_1}$ to
conclude that there exist $\alpha,r>0$ such that $J_\psi(u)\geq\alpha$ for all $u\in H_0^2(\Omega)$ with $\|u\|=r$.

$\mathrm{(ii)}$  Fix $u\in H_0^2(\Omega)\backslash \{0\}$. According to $(f1)$ and \eqref{88}, we have
\begin{equation*}
\begin{split}
J_\psi(tu)=&\frac{t^2}{2}\int_{\Omega}|\Delta u|^2\mathrm{d}x+\frac{1}{2}\widehat{\psi_0}(t^2\int_{\Omega}|\nabla u|^2\mathrm{d}x)-\lambda\int_{\Omega}F(x,tu)\mathrm{d}x-\frac{t^
{2^{**}}}{2^{**}}\int_{\Omega}|u|^{2^{**}}\mathrm{d}x.\\
\leq&\frac{t^2}{2}\int_{\Omega}|\Delta u|^2\mathrm{d}x+\frac{t^2}{2}c_0\int_{\Omega}|\nabla u|^2\mathrm{d}x-\frac{t^
{2^{**}}}{2^{**}}\int_{\Omega}|u|^{2^{**}}\mathrm{d}x.
\end{split}
\end{equation*}
Since $2^{**}>2$, we can find $t_u>0$ suitably large such that $J_\psi(t_u u)\leq0$ and $\|t_u u\|>r$.
Set $v_0:=t_uu$. This completes the proof.
\end{proof}

Now we define $$c_\lambda:=\inf_{\gamma\in \Gamma}\max_{t\in [0,1]}J_\psi(\gamma(t)),\Gamma=\{\gamma\in C([0,1],H_0^2(\Omega)):\gamma(0)=0,\gamma(1)=v_0\}.$$
We will prove that $c_\lambda\rightarrow 0$ as $\lambda\rightarrow\infty$, which plays an important role
in proving that $J_\psi$ satisfies the $(PS)_{c_\lambda}$ condition.

\begin{lemma}\label{le4.2}
Assume that $f$ satisfies $(f1)$, $(f2)$ and $(f4)$. Then $c_\lambda\rightarrow 0$ as $\lambda\rightarrow\infty$.
\end{lemma}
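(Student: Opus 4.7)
I will bound $c_\lambda$ above by $\sup_{s \geq 0} J_\psi(s\phi)$ for cleverly chosen test functions $\phi$, and then push this supremum to zero in a two-step limit: first let $\lambda \to \infty$ for fixed $\phi$, then let $\phi$ concentrate. The key device is a scaling family $\phi_\rho(x) := \phi((x - x_0)/\rho)$ with $x_0 \in \widetilde{\omega}$ and $\phi \in C_c^\infty(B_1(0))$, $0 \leq \phi \leq c_*$, $\phi \equiv c_*$ on $B_{1/2}(0)$, where $c_* \in I$ is any fixed value supplied by $(f4)$. For $\rho$ small enough that $B(x_0,\rho) \subset \widetilde{\omega}$, one has $\phi_\rho \in C_c^\infty(\widetilde{\omega}) \subset H_0^2(\Omega)$, and an elementary change of variables gives
\[
\|\Delta \phi_\rho\|_2^2 = \rho^{N-4}\|\Delta \phi\|_2^2,\qquad \|\nabla \phi_\rho\|_2^2 = \rho^{N-2}\|\nabla \phi\|_2^2,\qquad \|\phi_\rho\|_{2^{**}}^{2^{**}} = \rho^{N}\|\phi\|_{2^{**}}^{2^{**}},
\]
all of which tend to $0$ as $\rho \to 0$ since $N \geq 8 > 4$.

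\textbf{Estimate.} Set $A_\rho := \|\Delta \phi_\rho\|_2^2 + c_0\|\nabla \phi_\rho\|_2^2$, $B_\rho := \|\phi_\rho\|_{2^{**}}^{2^{**}}$ and $g_\rho(s) := \int_\Omega F(x, s\phi_\rho)\,\mathrm{d}x$. From $\widehat{\psi_0}(t) \leq c_0 t$ (see \eqref{88}) one obtains
\[
J_\psi(s\phi_\rho) \;\leq\; \frac{s^2}{2} A_\rho - \lambda g_\rho(s) - \frac{s^{2^{**}}}{2^{**}} B_\rho, \qquad s \geq 0.
\]
By $(f4)$, $f(x,t) > 0$ on $\widetilde{\omega} \times I$; since $c_* \in I$, $F(x, c_*) = \int_0^{c_*} f(x,t)\,\mathrm{d}t > 0$ for $x \in \widetilde{\omega}$, and so $g_\rho(1) > 0$. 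From $(f1)$, $f(x,\cdot) \geq 0$ on $[0,\infty)$, so $s \mapsto g_\rho(s)$ is non-decreasing; thus $g_\rho(s) \geq g_\rho(1) > 0$ for all $s \geq 1$. For $s \in [0,1]$, dropping non-positive terms yields $J_\psi(s\phi_\rho) \leq A_\rho/2$; for $s \geq 1$ one gets $J_\psi(s\phi_\rho) \leq M_\rho - \lambda g_\rho(1)$, where $M_\rho := \sup_{s \geq 0}\bigl[\tfrac{s^2}{2}A_\rho - \tfrac{s^{2^{**}}}{2^{**}}B_\rho\bigr] < \infty$. Choosing $T_\rho$ so large that $J_\psi(T_\rho \phi_\rho) \leq 0$ and $\|T_\rho \phi_\rho\| > r$ (possible because $J_\psi(s\phi_\rho) \to -\infty$ as $s \to \infty$, and since $2^{**} > 2$), the linear path $t \mapsto t\, T_\rho \phi_\rho$ is admissible for the mountain pass variational problem, so
\[
c_\lambda \;\leq\; \sup_{s \geq 0} J_\psi(s\phi_\rho) \;\leq\; \max\!\Bigl\{\tfrac{A_\rho}{2},\; M_\rho - \lambda\, g_\rho(1)\Bigr\}.
\]

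\textbf{Conclusion and main obstacle.} For each fixed $\rho > 0$, once $\lambda > (M_\rho - A_\rho/2)/g_\rho(1)$ the max above equals $A_\rho/2$, giving $\limsup_{\lambda \to \infty} c_\lambda \leq A_\rho/2$. Letting $\rho \to 0$ then forces $c_\lambda \to 0$. The principal obstacle is recognizing that one cannot use a single fixed test function $u$: a direct splitting yields a bound of the form $s_1^2\|u\|^2/2$, where $s_1$ is a lower threshold above which $\int_\Omega F(x, s u)\,\mathrm{d}x > 0$, and since $I \subset (0,\infty)$ need not contain $0$ one has $s_1 > 0$, producing a positive bound independent of $\lambda$. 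The two-parameter device circumvents this by decoupling the plateau height $c_* \in I$ (which ensures $g_\rho(1) > 0$) from the spatial scale $\rho$ (which drives $A_\rho \to 0$ via the $\rho^{N-4}$ factor characteristic of $H_0^2$ bump functions).
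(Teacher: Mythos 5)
Your proof is correct in substance, and it takes a genuinely different route from the paper's. The paper fixes a \emph{single} normalized test function $\widetilde{u}\in C_0^\infty(\Omega)$ supported in $\widetilde{\omega}$ with $\|\Delta\widetilde{u}\|_2=1$, considers the fibering map $j_\psi(t)=J_\psi(t\widetilde{u})$, and argues that its maximizer $t_\lambda$ tends to $0$: if not, one has $t_{\lambda_n}\to\beta>0$ along $\lambda_n\to\infty$, and passing to the limit in the stationarity identity forces $\int_{\widetilde{\omega}}f(x,\beta\widetilde{u})\widetilde{u}\,\mathrm{d}x=0$, which the paper declares contradicts $(f4)$. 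The bound $c_\lambda\leq\sup_{t\geq0}j_\psi(t)\leq\frac{t_\lambda^2}{2}(1+c_0\|\nabla\widetilde{u}\|_2^2)\to0$ then finishes. Your scheme instead bounds $c_\lambda$ by $\sup_{s\geq0}J_\psi(s\phi_\rho)$ along a two-parameter family of shrinking bumps of \emph{fixed plateau height} $c_*\in I$, splits the supremum across $s\leq1$ and $s\geq1$, and then sends $\lambda\to\infty$ and $\rho\to0$ in that order.

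The comparison is actually in your favor. The paper's contradiction step needs $\int_{\widetilde{\omega}}f(x,\beta\widetilde{u})\widetilde{u}\,\mathrm{d}x>0$ for \emph{every} positive limit $\beta$, but $(f1)$, $(f2)$, $(f4)$ only give $f\geq0$ on $[0,\infty)$ and strict positivity for $s\in I\subset(0,\infty)$. The set $\{x\in\widetilde{\omega}:\beta\widetilde{u}(x)\in I\}$ is empty whenever $\beta\max\widetilde{u}<\inf I$, so if $f$ vanishes identically on $[0,\inf I]$ (permitted by the hypotheses) and $\beta$ is small, the integral is exactly $0$ and no contradiction arises; the paper gives no lower bound on $t_\lambda$ to rule out this regime. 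This is precisely the obstacle you describe abstractly as the ``$s_1^2\|u\|^2/2$ plateau'' obstruction: for a fixed test function with $I$ bounded away from $0$ there is a range of small amplitudes where the subcritical term contributes nothing, and the max of the remaining coercive quadratic need not shrink with $\lambda$. Your construction removes the issue because the bump already sits at the height $c_*\in I$ at $s=1$, so $g_\rho(1)>0$ is guaranteed for every $\rho$, while the $\rho^{N-4}$ scaling of $\|\Delta\phi_\rho\|_2^2$ drives the quadratic ceiling $A_\rho/2$ to zero. In short, the paper proves more than it needs (convergence of the optimal $t_\lambda$) and in doing so leaves a gap; you prove exactly what is needed (the two-step $\limsup$ bound) and avoid it.

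Two minor points of rigor you should tidy. First, $c_\lambda$ is defined via paths terminating at a fixed $v_0$, while your linear paths terminate at $T_\rho\phi_\rho$, which changes with $\rho$; either adopt the standard ``free endpoint'' form of the mountain pass level (inf over all paths from $0$ into $\{J_\psi<0\}\cap\{\|\cdot\|>r\}$), or, as the paper implicitly does, choose the $v_0$ of Lemma \ref{le4.1} to be the terminal point of the test path you actually estimate (accepting a $\rho$-dependent level, which suffices for Theorem \ref{th4.1} and for the application of Lemma \ref{le4.3}). Second, pick $c_*$ in the \emph{interior} of $I$ so that an interval $(c_*-\varepsilon,c_*)\subset I$ exists and $F(x,c_*)=\int_0^{c_*}f(x,t)\,\mathrm{d}t\geq\int_{c_*-\varepsilon}^{c_*}f(x,t)\,\mathrm{d}t>0$ on $\widetilde{\omega}$; if $c_*$ happens to be the left endpoint of $I$ the strict positivity of $g_\rho(1)$ is not automatic.
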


\begin{proof}
Take a nonnegative function $\widetilde{u}\in C_0^\infty(\Omega)$ satisfying $\|\Delta \widetilde{u}\|_2=1$,
$\widetilde{u}(x)>0$ for $x\in \widetilde{\omega}$ and $\widetilde{u}(x)=0$ for $x\in\Omega\setminus\widetilde{\omega}$,
and consider the fibering map $j_\psi(t):(0,+\infty)\rightarrow \mathbb{R}$ defined by
\begin{equation}\label{JTtu}
\begin{split}
j_\psi(t):=J_\psi(t\widetilde{u})=&\frac{t^2}{2}\int_{\Omega}|\Delta \widetilde{u}|^2\mathrm{d}x+\frac{1}{2}\widehat{\psi_0}(t^2\int_{\Omega}|\nabla \widetilde{u}|^2\mathrm{d}x)-\lambda\int_{\Omega}F(x,t\widetilde{u})\mathrm{d}x
-\frac{t^{2^{**}}}{2^{**}}\int_{\Omega}|\widetilde{u}|^{2^{**}}\mathrm{d}x\\
=&\frac{t^2}{2}+\frac{1}{2}\widehat{\psi_0}(t^2\int_{\Omega}|\nabla \widetilde{u}|^2\mathrm{d}x)-\lambda\int_{\Omega}F(x,t\widetilde{u})\mathrm{d}x
-\frac{t^{2^{**}}}{2^{**}}\int_{\Omega}|\widetilde{u}|^{2^{**}}\mathrm{d}x.
\end{split}
\end{equation}
Using \eqref{88}, we have
\begin{equation}\label{JTtu}
\begin{split}
j_\psi(t)\leq\frac{t^2}{2}+\frac{t^2}{2}c_0\int_{\Omega}|\nabla \widetilde{u}|^2\mathrm{d}x-\lambda\int_{\widetilde{\omega}}F(x,t\widetilde{u})\mathrm{d}x
-\frac{t^{2^{**}}}{2^{**}}\int_{\Omega}|\widetilde{u}|^{2^{**}}\mathrm{d}x.
\end{split}
\end{equation}
Since $2^{**}>2$, we have $\lim\limits_{t\rightarrow0}j_\psi(t)=0$ and $\lim\limits_{t\rightarrow +\infty}j_\psi(t)=-\infty$.
Moreover, recalling \eqref{fu}, \eqref{88} and using the similar argument to that in the proof of Lemma \ref{le3.4},
one sees that $j_\psi(t)$ is positive for $t > 0$ suitably small. Therefore, there exists a $t_\lambda>0$
such that $j_\psi(t_\lambda)=\sup_{t\geq0}j_\psi(t)$ and
\begin{equation}\label{df2}
1+\psi_0(t_\lambda^2\int_{\Omega}|\nabla \widetilde{u}|^2\mathrm{d}x)\int_{\Omega}|\nabla \widetilde{u}|^2\mathrm{d}x-\frac{\lambda}{t_\lambda}\int_{\omega}f(x,t_\lambda \widetilde{u})\widetilde{u}\mathrm{d}x
-t_\lambda^{2^{**}-2}\int_{\Omega}|\widetilde{u}|^{2^{**}}\mathrm{d}x=0.
\end{equation}
According to $(f1)$, we have
$$1+\psi_0(t_\lambda^2\int_{\Omega}|\nabla \widetilde{u}|^2\mathrm{d}x)\int_{\Omega}|\nabla \widetilde{u}|^2\mathrm{d}x
\geq t_\lambda^{2^{**}-2}\int_{\Omega}|\widetilde{u}|^{2^{**}}\mathrm{d}x,$$
which implies that $t_\lambda$ is bounded since $a\leq\psi_0\leq c_0$ and $2^{**}>2$.

We claim that $t_\lambda\rightarrow 0$ as $\lambda\rightarrow\infty$. If not, there would exist a sequence
$\{\lambda_n\}$ and a constant $\beta>0$ such that $\lambda_n\rightarrow\infty$ and $t_{\lambda_n}\rightarrow\beta$ as $n\rightarrow\infty$.
In view of $(f1)$, $(f2)$, $(f4)$ and the definition of $\widetilde{u}$, we have
$$\frac{1}{t_{\lambda_n}}\int_{\widetilde{\omega}}f(x,t_{\lambda_n} \widetilde{u})\widetilde{u}\mathrm{d}x\rightarrow
 \frac{1}{\beta}\int_{\widetilde{\omega}}f(x,\beta \widetilde{u})\widetilde{u}\mathrm{d}x>0, \ as \ n\rightarrow\infty.$$
On the other hand, by taking $\lambda=\lambda_n$ and letting $n\rightarrow\infty$ in \eqref{df2} it follows
$$\int_{\widetilde{\omega}}f(x,\beta \widetilde{u})\widetilde{u}\mathrm{d}x=0,$$
a contradiction. Therefore, $\lim\limits_{\lambda\rightarrow\infty}t_\lambda=0$ as claimed.

In view of $(f1)$ and \eqref{JTtu}, we have
$$0<\alpha\leq c_\lambda=\inf_{\gamma\in \Gamma}\max_{t\in [0,1]}J_\psi(\gamma(t))
\leq\max_{t\in [0,1]}J_\psi(tt_{\widetilde{u}}\widetilde{u})\leq\sup_{t\geq0}J_\psi(t\widetilde{u})\leq\frac{t_\lambda^2}{2}
+\frac{t_\lambda^2}{2}c_0\int_{\Omega}|\nabla \widetilde{u}|^2\mathrm{d}x,$$
where $t_{\widetilde{u}}>0$ is the constant determined in the proof of Lemma \ref{le4.1}.
Since $t_\lambda\rightarrow 0$ as $\lambda\rightarrow\infty$,
we obtain $c_\lambda \rightarrow 0$ as $\lambda \rightarrow \infty$. The proof is complete.
\end{proof}

\begin{lemma}\label{le4.3}
Let $f$ satisfy $(f1)$ and $(f2)$, and assume that
$c<(\frac{1}{2}-\frac{1}{2^{**}})S^{\frac{N}{4}}.$
Then $J_\psi$ satisfies the $(PS)_c$ condition.
\end{lemma}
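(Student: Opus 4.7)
The plan is to mimic the three-step structure of Lemma \ref{jubuPStiaojian}, but to exploit in an essential way the boundedness of the truncated function $\psi_0$ and the constraint $c_0<\frac{\theta}{2}a$ coming from the definition of $\psi_0$. Let $\{u_n\}\subset H_0^2(\Omega)$ be a $(PS)_c$ sequence for $J_\psi$.

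\textbf{Step 1: Boundedness of $\{u_n\}$.} Here lies the main new difficulty, since for $N\ge 8$ we only have $2<\theta<2^{**}\le 4$, so the classical trick in Lemma \ref{jubuPStiaojian} (which relied on $\theta\ge 4$ to absorb the nonlocal quartic term) fails. My proposal is to compute
\[
J_\psi(u_n)-\tfrac{1}{\theta}\langle J_\psi'(u_n),u_n\rangle=\left(\tfrac{1}{2}-\tfrac{1}{\theta}\right)\|\Delta u_n\|_2^2+\mathcal{A}(u_n)+\mathcal{B}(u_n)+\mathcal{C}(u_n),
\]
where $\mathcal{A}(u_n)=\tfrac{1}{2}\widehat{\psi_0}(\|\nabla u_n\|_2^2)-\tfrac{1}{\theta}\psi_0(\|\nabla u_n\|_2^2)\|\nabla u_n\|_2^2$, $\mathcal{B}(u_n)=\lambda\!\int_\Omega(\tfrac{1}{\theta}f(x,u_n)u_n-F(x,u_n))\,\mathrm{d}x$ and $\mathcal{C}(u_n)=(\tfrac{1}{\theta}-\tfrac{1}{2^{**}})\!\int_\Omega|u_n|^{2^{**}}\mathrm{d}x$. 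By $(f1)$, $\mathcal{B}(u_n)\ge 0$ and $\mathcal{C}(u_n)\ge 0$. The crucial observation is that by \eqref{ine4.1} and \eqref{88},
\[
\mathcal{A}(u_n)\ge\left(\tfrac{a}{2}-\tfrac{c_0}{\theta}\right)\|\nabla u_n\|_2^2\ge 0,
\]
because the truncation level $c_0$ was chosen to satisfy $c_0<\tfrac{\theta}{2}a$. Thus $c+1+o(1)\|u_n\|\ge(\tfrac{1}{2}-\tfrac{1}{\theta})\|\Delta u_n\|_2^2$, which together with the $\|\nabla u_n\|_2^2$ bound just obtained yields boundedness of $\{u_n\}$ in $H_0^2(\Omega)$.

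\textbf{Step 2: Ruling out concentration.} I would run verbatim the concentration-compactness argument of Lemma \ref{jubuPStiaojian}. Up to a subsequence we have the convergences in \eqref{un} and the measures $\mu,\nu$ from Lemma \ref{jizhongjin} satisfying \eqref{uv1} and \eqref{uv}. Testing $J_\psi'(u_n)$ against $u_n\phi$ with a cut-off $\phi$ concentrated at $x_i$, the only formal change is that the coefficient $a+b\|\nabla u_n\|_2^2$ is replaced by $\psi_0(\|\nabla u_n\|_2^2)$, which is uniformly bounded by $c_0$ thanks to \eqref{ine4.1}. Consequently the analogues of $I_4$ and $I_5$ are still controlled (the term corresponding to $I_5$ tends to $0$ as $\rho\to 0$ by exactly the same Hölder estimate), and we again deduce $\mu_i\le\nu_i$, hence $\nu_i=0$ or $\nu_i\ge S^{N/4}$. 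In the second case, combining \eqref{uv1}--\eqref{uv} with the nonnegativity of $\mathcal{A}$ and $\mathcal{B}$ yields
\[
c\ge\left(\tfrac{1}{2}-\tfrac{1}{\theta}\right)\mu_i+\left(\tfrac{1}{\theta}-\tfrac{1}{2^{**}}\right)\nu_i\ge\left(\tfrac{1}{2}-\tfrac{1}{2^{**}}\right)S^{\frac{N}{4}},
\]
contradicting the hypothesis. Therefore $\int_\Omega|u_n|^{2^{**}}\mathrm{d}x\to\int_\Omega|u|^{2^{**}}\mathrm{d}x$.

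\textbf{Step 3: Strong convergence.} Finally, from $\langle J_\psi'(u_n),u_n-u\rangle=o(1)$ I would isolate $\int_\Omega|\Delta u_n|^2\mathrm{d}x-\int_\Omega\Delta u_n\Delta u\,\mathrm{d}x$. The nonlocal term is harmless because $\psi_0(\|\nabla u_n\|_2^2)\le c_0$ and $\nabla u_n\to\nabla u$ strongly in $L^2(\Omega)$ by \eqref{un}; the critical term vanishes by Step 2 together with Brezis--Lieb or Hölder's inequality; the subcritical term vanishes using \eqref{fu} and the strong $L^p$ convergence for $p<2^{**}$. This forces $\|\Delta(u_n-u)\|_2\to 0$, and combined with $\nabla u_n\to\nabla u$ in $L^2(\Omega)$ gives $\|u_n-u\|\to 0$, completing the proof. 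The decisive step, and the reason the truncation was introduced in the first place, is the boundedness argument in Step 1, where the inequality $c_0<\frac{\theta}{2}a$ is exactly what replaces the lost inequality $\theta\ge 4$.
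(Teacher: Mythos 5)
Your proposal is correct and follows essentially the same route as the paper: the decomposition $J_\psi(u_n)-\frac{1}{\theta}\langle J_\psi'(u_n),u_n\rangle$, the bound $\frac{1}{2}\widehat{\psi_0}(t)-\frac{1}{\theta}\psi_0(t)t\ge(\frac{a}{2}-\frac{c_0}{\theta})t\ge 0$ via $a\le\psi_0\le c_0<\frac{\theta}{2}a$, the concentration-compactness step with $\psi_0$ uniformly bounded replacing $a+b\|\nabla u_n\|_2^2$, and the final strong-convergence argument all match the paper's proof. The only cosmetic difference is that you first note $\mathcal{A}(u_n)\ge 0$ and then reinvoke the $\|\nabla u_n\|_2^2$ bound, whereas the paper keeps both terms at once to read off $c+1+o(1)\|u_n\|\ge(\frac{1}{2}-\frac{c_0}{\theta a})\|u_n\|^2$ directly; this is the same inequality arranged differently.
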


\begin{proof}
We still divide the proof into three steps.

{\bf Step 1.} The boundedness of the $(PS)_c$ sequence of $J_\psi$.
Let $\{u_n\}\subset H_0^2(\Omega)$ be a $(PS)_c$ sequence of $J_\psi$.
From the definition of the $(PS)_c$ sequence, we have, as $n\rightarrow\infty$,
\begin{equation}\label{equ4.6}
J_\psi(u_n)\rightarrow c, \ \ J_\psi'(u_n)\rightarrow 0 \ in \ H^{-2}(\Omega).
\end{equation}
Then it follows from $(f1)$, \eqref{th4.1}, \eqref{88} and $a<c_0<\dfrac{\theta}{2}a$ that
\begin{equation*}
\begin{split}
c+1+o(1)\|u_n\|=&J_\psi(u_n)-\frac{1}{\theta}\langle J_\psi'(u_n),u_n\rangle\\
=&(\frac{1}{2}-\frac{1}{\theta})\int_{\Omega}|\Delta u_n|^2\mathrm{d}x+\frac{1}{2}\widehat{\psi_0}(\int_{\Omega}|\nabla u_n|^2\mathrm{d}x)-\frac{1}{\theta}\psi_0(\int_{\Omega}|\nabla u_n|^2\mathrm{d}x)\int_{\Omega}|\nabla u_n|^2\mathrm{d}x\\
&+\lambda\int_{\Omega}(\frac{1}{\theta}f(x,u_n)u_n-F(x,u_n))\mathrm{d}x
+(\frac{1}{\theta}-\frac{1}{2^{**}})\int_{\Omega}|u_n|^{2^{**}}\mathrm{d}x\\
\geq&(\frac{1}{2}-\frac{1}{\theta})\int_{\Omega}|\Delta u_n|^2\mathrm{d}x+(\frac{1}{2}a-\frac{1}{\theta}c_0)\int_{\Omega}|\nabla u_n|^2\mathrm{d}x\\
\geq&(\frac{1}{2}-\frac{c_0}{\theta a})\|u_n\|^2,
\end{split}
\end{equation*}
which ensures the boundedness of $\{u_n\}$ in $H_0^2(\Omega)$.

{\bf Step 2.} The compactness of $\{u_n\}$ in $L^{2^{**}}(\Omega)$.
Since $\{u_n\}$ is bounded in $H_0^2(\Omega)$, as was done in Section $2$,
we still have \eqref{un}, \eqref{uv1} and \eqref{uv}. According to \eqref{equ4.6},
we have
\begin{equation}\label{ojun}
\begin{split}
o(1)=&\langle J_\psi'(u_n),u_n\phi\rangle\\
=&\int_{\Omega}\Delta{u_n}\Delta(u_n \phi)\mathrm{d}x+\psi_0(\int_{\Omega}|\nabla u_n|^2\mathrm{d}x)\int_{\Omega}\nabla {u_n}\nabla (u_n \phi)\mathrm{d}x-\lambda\int_{\Omega}f(x,u_n)u_n \phi\mathrm{d}x\\
&-\int_{\Omega}|u_n|^{2^{**}-2}u_n u_n \phi\mathrm{d}x\\
=&\int_{\Omega}|\Delta u_n|^2 \phi\mathrm{d}x+2\int_{\Omega}\Delta u_n\nabla u_n \nabla\phi\mathrm{d}x
+\int_{\Omega}\Delta u_n u_n \Delta\phi\mathrm{d}x+\psi_0(\int_{\Omega}|\nabla u_n|^2\mathrm{d}x)\int_{\Omega}|\nabla u_n|^2\phi\mathrm{d}x\\
&+\psi_0(\int_{\Omega}|\nabla u_n|^2\mathrm{d}x)\int_{\Omega}\nabla u_n u_n \nabla\phi\mathrm{d}x
-\lambda\int_{\Omega}f(x,u_n)u_n \phi\mathrm{d}x-\int_{\Omega}|u_n|^{2^{**}}\phi\mathrm{d}x,
\end{split}
\end{equation}
where $\phi$ is the same as that in the proof of Lemma \ref{jubuPStiaojian}.
Notice that each term on the right hand side of \eqref{ojun} is the same as that of \eqref{123}
except the fourth and fifth ones. The fourth term is nonnegative which can be dropped when we take limit,
and the fifth one can be estimated exactly the same as that of \eqref{ine3} by the boundedness of $\psi_0$.
Therefore, by letting $n\rightarrow\infty$ and $\varepsilon\rightarrow 0$ successively
in \eqref{ojun}, recalling \eqref{uv1}, \eqref{ine1}, \eqref{ine2}, \eqref{ine3}, \eqref{ine4}
and the nonnegativity of the fourth term on the right hand side of \eqref{ojun},
one has $\mu_i\leq \nu_i$. Combing this with \eqref{uv}, we arrive at
$$(\rm I) \ \ \nu_i=0 \ \ \ or \ \ \ (\rm {II}) \ \ \nu_i\geq S^\frac{N}{4}.$$

Next we claim that (\rm II) cannot occur. If (\rm II) holds for some $i \in I$,
according to $(f1)$, \eqref{uv1}, \eqref{uv}, \eqref{ine4.1}, \eqref{88}, \eqref{equ4.6}
and the inequality $a<c_0<\frac{\theta}{2}a$, we obtain
\begin{equation*}
\begin{split}
c=&\lim\limits_{n\rightarrow\infty}\left(J_\psi(u_n)-\frac{1}{\theta}\left\langle J_\psi'(u_n),u_n\right\rangle\right)\\
=&\lim\limits_{n\rightarrow\infty}\bigg\{(\frac{1}{2}-\frac{1}{\theta})\int_{\Omega}|\Delta u_n|^2\mathrm{d}x+\frac{1}{2}\widehat{\psi_0}(\int_{\Omega}|\nabla u_n|^2\mathrm{d}x)-\frac{1}{\theta}\psi_0(\int_{\Omega}|\nabla u_n|^2\mathrm{d}x)\int_{\Omega}|\nabla u_n|^2\mathrm{d}x\\
&+\lambda\int_{\Omega}(\frac{1}{\theta}f(x,u_n)u_n-F(x,u_n))\mathrm{d}x
+(\frac{1}{\theta}-\frac{1}{2^{**}})\int_{\Omega}|u_n|^{2^{**}}\mathrm{d}x\bigg\}\\
\geq&\lim\limits_{n\rightarrow\infty}\bigg\{(\frac{1}{2}-\frac{1}{\theta})\int_{\Omega}|\Delta u_n|^2\phi\mathrm{d}x+(\frac{1}{\theta}-\frac{1}{2^{**}})\int_{\Omega}|u_n|^{2^{**}}\phi\mathrm{d}x\bigg\}\\
\geq&(\frac{1}{2}-\frac{1}{\theta})\mu_i+(\frac{1}{\theta}-\frac{1}{2^{**}})\nu_i\\
\geq&(\frac{1}{2}-\frac{1}{\theta})S\nu_i^{\frac{2}{2^{**}}}+(\frac{1}{\theta}-\frac{1}{2^{**}})\nu_i\\
\geq&(\frac{1}{2}-\frac{1}{2^{**}})S^\frac{N}{4}.
\end{split}
\end{equation*}
This is impossible. Consequently, $\nu_i=0$ for all $i\in I$, which ensures that
$$\int_{\Omega}|u_n|^{2^{**}}\mathrm{d}x\rightarrow\int_{\Omega}|u|^{2^{**}}\mathrm{d}x\ \text{as} \ n\rightarrow\infty.$$

{\bf Step 3.} The compactness of $\{u_n\}$ in $H_0^2(\Omega)$. From the above discussion, we know that the $(PS)_c$ sequence $\{u_n\}$ is bounded in $H_0^2(\Omega)$ and there exist a subsequence of $\{u_n\}$
(which we still denote by $\{u_n\})$ and a function $u\in H_0^2(\Omega)$ such that
$$u_n\rightarrow u \ \ in \ L^{2^{**}}(\Omega).$$
According to \eqref{equ4.6}, we have
\begin{equation}\label{equality4.8}
\begin{split}
o(1)=&\langle J_\psi'(u_n),u_n-u\rangle\\
=&\int_{\Omega}|\Delta u_n|^2\mathrm{d}x-\int_{\Omega}\Delta u_n\Delta u\mathrm{d}x+\psi_0(\int_{\Omega}|\nabla u_n|^2\mathrm{d}x)\int_{\Omega}\nabla u_n\nabla(u_n-u)\mathrm{d}x\\
&-\lambda\int_{\Omega}f(x,u_n)(u_n-u)\mathrm{d}x-\int_{\Omega}|u_n|^{2^{**}-2}u_n(u_n-u)\mathrm{d}x.
\end{split}
\end{equation}
It follows from \eqref{ine4.1} and the strong convergence of $\{u_n\}$ in $H_0^1(\Omega)$ that
\begin{equation*}
\begin{split}
&\limsup\limits_{n\rightarrow\infty}\left|\psi_0(\int_{\Omega}|\nabla u_n|^2\mathrm{d}x)\int_{\Omega}\nabla u_n\nabla(u_n-u)\mathrm{d}x\right|\\
\leq&\limsup\limits_{n\rightarrow\infty}c_0\left|\int_{\Omega}|\nabla u_n|^2\mathrm{d}x-\int_{\Omega}\nabla u_n\nabla u\mathrm{d}x\right|\\
=&0.
\end{split}
\end{equation*}
Then combining this fact with \eqref{funu} and \eqref{un-u}, and letting $n\rightarrow\infty$
in \eqref{equality4.8}, we conclude that
$$\int_{\Omega}|\Delta u_n|^2\mathrm{d}x-\int_{\Omega}\Delta u_n\Delta u\mathrm{d}x=o(1), \ \ as \ n\rightarrow\infty,$$
which, together with \eqref{un}, implies $\|u_n-u\|\rightarrow0$ as $n\rightarrow\infty$.
The proof is complete.
\end{proof}

{\bf Proof of Theorem \ref{th4.1}.}
It follows from Lemma \ref{le4.1} that $J_\psi$ satisfies the mountain pass geometry,
which ensures that there exists a sequence $\{u_n\}\subset H_0^2(\Omega)$ such that
$J_\psi(u_n)\rightarrow c_\lambda$ and $J_\psi'(u_n)\rightarrow 0$ in $H^{-2}(\Omega)$
as $n\rightarrow \infty$, where
$$c_\lambda:=\inf_{\gamma\in \Gamma}\max_{t\in [0,1]}J_\psi(\gamma(t)),\ \Gamma=\{\gamma\in C([0,1],H_0^2(\Omega)):\gamma(0)=0,\gamma(1)=v_0\}.$$
and $v_0$ is determined in the proof of Lemma \ref{le4.1}. According to Lemmas \ref{le4.2}
and \ref{le4.3}, there exists $\lambda_0>0$ such that $J_\psi$ satisfies $(PS)_{c_\lambda}$
condition for all $\lambda>\lambda_0$. From Lemma \ref{Mountain Pass Lemma}, we know that there
exists a $u_\lambda\in H_0^2(\Omega)$ such that $u_n\rightarrow u_\lambda$ in $H_0^2(\Omega)$ as $n\rightarrow\infty$, and $u_\lambda$ is a nontrivial solution to problem \eqref{eq2} for all $\lambda>\lambda_0$. This proof of Theorem \ref{th4.1} is complete.

On the basis of Theorem \ref{th4.1}, we prove Theorem \ref{th1.2}.

{\bf Proof of Theorem \ref{th1.2}.}
Let $\lambda_0$ be as in Theorem \ref{th4.1}. Then for each $\lambda> \lambda_0$,
let $u_\lambda$ be a nontrivial solution to problem \eqref{eq2} and $c_\lambda$ be
the critical value of $J_\psi$. As was mentioned at the beginning of this section,
$u_\lambda$ is also a solution to problem \eqref{eq1} when $\|\nabla u_\lambda\|_2^2\leq t_0$.

We claim that there exists $ \lambda_*>\lambda_0$ such that $\|\nabla u_\lambda\|_2^2\leq t_0$ for all $\lambda\geq \lambda_*$.
If not, there would be a sequence $\{\lambda_n\}\subset \mathbb{R}$ with $\lambda_n\rightarrow+\infty$ as $n\rightarrow \infty$
and $\|\nabla u_{\lambda_n}\|_2^2> t_0$. From the proof of Theorem \ref{th4.1}, we see that $J_\psi(u_{\lambda_n})=c_{\lambda_n}$ and
$J_\psi'(u_{\lambda_n})=0$. Thus, by using $(f1)$, \eqref{ine4.1} and \eqref{88}, we deduce that
\begin{equation*}
\begin{split}
c_{\lambda_n}=&J_\psi(u_{\lambda_n})-\frac{1}{\theta}\langle J_\psi'(u_{\lambda_n}),u_{\lambda_n}\rangle\\
=&(\frac{1}{2}-\frac{1}{\theta})\int_{\Omega}|\Delta u_{\lambda_n}|^2\mathrm{d}x+\frac{1}{2}\widehat{\psi_0}(\int_{\Omega}|\nabla u_{\lambda_n}|^2\mathrm{d}x)-\frac{1}{\theta}\psi_0(\int_{\Omega}|\nabla u_{\lambda_n}|^2\mathrm{d}x)\int_{\Omega}|\nabla u_{\lambda_n}|^2\mathrm{d}x\\
&+\lambda_n\int_{\Omega}(\frac{1}{\theta}f(x,u_{\lambda_n})u_{\lambda_n}-F(x,u_{\lambda_n}))\mathrm{d}x
+(\frac{1}{\theta}-\frac{1}{2^{**}})\int_{\Omega}|u_{\lambda_n}|^{2^{**}}\mathrm{d}x\\
\geq&(\frac{a}{2}-\frac{c_0}{\theta})\int_{\Omega}|\nabla u_{\lambda_n}|^2\mathrm{d}x\\
>&(\frac{a}{2}-\frac{c_0}{\theta})t_0.
\end{split}
\end{equation*}
This contradicts with Lemma \ref{le4.2} for sufficiently large $n$. Consequently,
there exists $\lambda_*>\lambda_0>0$ such that $u_\lambda$ is a nontrivial solution to problem \eqref{eq1} for all $\lambda\geq \lambda_*$.

Finally, we study the asymptotic behavior of this solution $u_\lambda$ when $\lambda$ goes to infinity.
According to $(f1)$, \eqref{ine4.1} and \eqref{88}, we have
\begin{equation*}
\begin{split}
c_\lambda=J_\psi(u_\lambda)-\frac{1}{\theta}\langle J_\psi'(u_\lambda),u_\lambda\rangle
\geq(\frac{1}{2}-\frac{c_0}{\theta a})\|u_\lambda\|^2,
\end{split}
\end{equation*}
which, by recalling Lemma \ref{le4.2} again, implies that $\lim\limits_{\lambda\rightarrow\infty} \|u_\lambda\|=0$.
The proof of Theorem \ref{th1.2} is complete.


\begin{thebibliography}{xx}

\bibitem{Ball}
J.M. Ball, Initial-boundary value problems for an extensible beam, J. Math. Anal. Appl., {\bf42} (1973), 61-90.

\bibitem{Berger}
H.M. Berger, A new approach to the analysis of large deflections of plates. J. Appl. Mech., {\bf22} (1955), 465-472.

\bibitem{BreNi}
H. Br\'{e}zis, L. Nirenberg, Positive solutions of nonlinear elliptic equations involving critical Sobolev exponents,  Comm. Pure Appl. Math., {\bf36} (1983), 437-477.

\bibitem{DD1990}
D.E. Edmunds, D. Fortunato, E. Jannelli, Critical exponents, critical dimensions and the biharmonic operator, Arch. Rational Mech. Anal., {\bf112}(1990), 269-289.

\bibitem{Figueiredo2013}
G.M. Figueiredo, Existence of a positive solution for a Kirchhoff problem type with critical growth via truncation argument, J. Math. Anal. Appl., {\bf401}(2013), 706-713.

\bibitem{HE2016}
E.M. Hssini, M. Massar, N. Tsouli, Solutions to Kirchhoff equations with critical exponent,
Arab J. Math. Sci., {\bf22}(2016), 138-149.

\bibitem{LvZongyan2021}
Q. He, Z. Lv, Existence and nonexistence of nontrivial solutions for critical biharmonic equations,
J. Math. Anal. Appl., {\bf495}(2021), No. 124713, 30 pp.


\bibitem{LHW2023}
Q. Li, Y. Han, T. Wang, Existence and nonexistence of solutions to a critical biharmonic equation with logarithmic perturbation. Journal of Differential Equations, {\bf 365}(2023), 1-37.


\bibitem{Lions1}
P.L. Lions, The concentration-compactness principle in the calculus of variations. The
limit case, part 1, Rev. Mat. Iberoamericana, {\bf1}(1985), 145-201.

\bibitem{Lions2}
P.L. Lions, The concentration-compactness principle in the calculus of variations. The
limit case, part 2, Rev. Mat. Iberoamericana, {\bf1}(1985), 45-121.

\bibitem{ND}
D. Naimen, Positive solutions of Kirchhoff type elliptic equations involving a critical Sobolev exponent,
NoDEA Nonlinear Differential Equations Appl., {\bf 21}(2014), 885-914.

\bibitem{Shishaoyun}
Y. Song, S. Shi, Multiplicity of solutions for fourth-order elliptic equations of Kirchhoff type with critical exponent, J. Dyn. Control Syst., {\bf23}(2017), 375-386.

\bibitem{Wang and An}
F. Wang, Y. An, Existence and multiplicity of solutions for a fourth-order elliptic equation, Bound. Value
Probl., {\bf2012}(2012), No. 6, 9 pp.

\bibitem{WangFanglei}
F. Wang, M. Avci, Y. An, Existence of solutions for fourth order elliptic equations of Kirchhoff type, J. Math. Anal. Appl., {\bf409}(2014), 140-146.

\bibitem{Willem}
M. Willem, Minimax Theorems, Birkh\"{a}user, Boston, 1996.

\bibitem{RP1993}
R.C.A.M. Van der Vorst, Best constant for the embedding of the space $H^2\cap H_0^1$ into $L^{\frac{2N}{N-2}}$, Differential Integral Equations, {\bf6}(1993), 259-276.

\end{thebibliography}
\end{document}